\newtheorem{theorem}{Theorem}
\newtheorem{lemma}{Lemma}
\newtheorem{corollary}{Corollary}
\newtheorem{proposition}{Proposition}
\journal{Journal}
\begin{document}

\begin{frontmatter}

\title{Controllability and mixing for acoustic wave motions\tnoteref{mytitlenote}}
\tnotetext[mytitlenote]{This research was partially supported by the National Natural Science Foundation of China (12272297, 12101471 and 11971308) and the Fundamental Research Funds for the Central Universities (104972024KFYjc0065).}

\author[mymainaddress]{Zhe Jiao}
\ead{zjiao@nwpu.edu.cn}


\author[mymainaddress]{Xiao Li}
\ead{lixiaoo@mail.nwpu.edu.cn}

\author[mythirdaddress]{Qin Zhao\corref{mycorrespondingauthor}}
\cortext[mycorrespondingauthor]{Corresponding author}
\ead{qzhao@whut.edu.cn}

\address[mymainaddress]{School of Mathematics and Statistics, MOE Key Laboratory for Complexity Science in Aerospace, Northwestern Polytechnical University, Xi'an 710129, China}



\address[mythirdaddress]{School of Mathematics and Statistics, Wuhan University of Technology, Wuhan 430070, China}

\begin{abstract}
This paper concerns the dynamical behaviors of acoustic wave motion driven by a force acting through the boundary. 
If the boundary force is a suitable control, we show that the dynamical system associated to the acoustic wave motion is exactly controllable. Furthermore, when it is random perturbation of white noise type, we prove that the corresponding stochastic system is strong mixing. The bridge between these two problems is the observability inequality, which will be established in this work. 
\end{abstract}

\begin{keyword}
Wave equation, acoustic boundary condition, observability inequality, exact controllability, mixing
\MSC[2020] 60H15 \sep 35L53 \sep 35R60 \sep 37A25
\end{keyword}

\end{frontmatter}


\section{Introduction}
\label{sec:intro}

\subsection{Problem set up and goals}
Let $\Omega\subset \mathbb{R}^3$ be a bounded and simply connected domain that is full of some kind of idealized fluid. The small amplitude acoustic phenomena occurring in $\Omega$ can be modeled below
\begin{equation}
	\rho\mathbf{v}_t = -\nabla p, \quad \kappa p_t = -\nabla\cdot\mathbf{v}, \quad \textrm{in $\mathbb{R}^{+} \times \Omega$}, \label{A1}
\end{equation}
where the vector $\mathbf{v}(t, x)$ is the fluid velocity, $p(t, x)$ the acoustic pressure, $\rho>0$ the uniform density of the fluid and $\kappa>0$ the adiabatic compressibility, see in~\cite{MI}. As the idealized fluid is non-viscous, it is irrotational, which is characterized by having a curl of its velocity field equal to zero, that is, 
\begin{equation} \label{A2}
	\mathrm{curl} \mathbf{v} = 0,  \quad \textrm{in $\mathbb{R}^{+} \times \Omega$}.
\end{equation}

Throughout the paper, the domain $\Omega$ is hypothesized to possess a smooth boundary $\Gamma = \partial\Omega$. Moreover, $\Gamma = \Gamma_0\cup\Gamma_1$, $\overline{\Gamma_0}\cap\overline{\Gamma_1} = \emptyset$ and $\Gamma_1$ is nonempty.
When the acoustic wave is incident on the boundary $\Gamma$, the acoustic pressure $p$ tends to make the boundary move. 
Assume that the portion $\Gamma_0$ of the boundary is rigid and impenetrable to that kind of fluid so that
\begin{equation} 
	 \mathbf{v}\cdot\mathbf{n} = 0,  \quad \textrm{on $\mathbb{R}^{+} \times \Gamma_0$}, \label{B0} 
\end{equation}
where $\mathbf{n}$ is the unit exterior normal vector of the boundary. 
Suppose that the material forming the boundary is an elastic membrane. In this case, the boundary $\Gamma_1$ is referred to as a \emph{non-locally reacting surface}, which can deform under the influence of pressure $p(t, x)$ and exterior force $f(t, x)$. This allows us to use the following mathematical relationship, which is so-called the \emph{acoustic boundary condition}, to describe the motion along the boundary $\Gamma_1$.  
\begin{equation} \label{B1}
\left
    \{
        \begin{array}{ll}
            -p= m v_{tt} - \mathrm{div}_{\Gamma}\left(\sigma\nabla_{\Gamma} v\right) + d v_t + k v + f,\\ 
             \mathbf{n}\cdot \mathbf{v}=-v_t, 
        \end{array} 
\right. \quad \textrm{on $\mathbb{R}^{+} \times \Gamma_1$}.
\end{equation}
Here, $v(t, x)$ is the normal displacement into the domain of a point $x \in \Gamma_1$ at time $t$, $m>0$ is the mass per unit area of $\Gamma_1$, $\sigma\geqslant0$ is the effective stiffness coefficient, $d>0$ is the resistivity, and $k\geqslant0$ is the spring constant. The first equation of wave-type arises from the vibration of the boundary~$\Gamma_1$, while the second equation is derived from the continuity of velocity at the boundary~$\Gamma_1$.

The boundary-value problem~\eqref{A1}-\eqref{B1} is an Eulerian model of the interaction between acoustic wave motions and non-locally reacting surfaces. The physical derivation of this model is given in~\cite[Chapter 7]{mugnolo2024wave}. As a byproduct of the derivation, the following constraint should be added as a constitutive part of this Eulerian model
\begin{equation}\label{Co}
\int_{\Omega}p(t, x)\mathrm{d}x = \frac{1}{\kappa}\int_{\Gamma_1}v(t, x)\mathrm{d}\Gamma, \quad \forall t\geqslant 0,
\end{equation}
which is the Hooke's law written at a macroscopic level.

This constrained Eulerian model~\eqref{A1}-\eqref{Co} can be reformulated to be a potential model. Due to the irrotational property~\eqref{A2}, the velocity field $\mathbf{v}$ can be obtained by taking the gradient of a scalar function $u(t, \mathbf{x})$, that is, $\mathbf{v}=-\nabla u$. Here, $u$ is always called the velocity potential. Consequently, from the first equation of~(\ref{A1}) it implies that $p=\rho u_t$, and by the second equation of~(\ref{A1}) $u$ satisfies the following wave equation
\begin{equation}
	 u_{tt} = c^2\triangle u, \quad \textrm{in $\mathbb{R}^{+} \times \Omega$.} \label{1}
\end{equation}
Here, $c=\sqrt{\frac{1}{\kappa\rho}}$ is the speed of wave propagation. The boundary conditions \eqref{B0} and \eqref{B1} can be rewritten as
\begin{equation} \label{2}
\left
    \{
        \begin{array}{ll}
            -\rho u_t= m v_{tt} - \mathrm{div}_{\Gamma}\left(\sigma\nabla_{\Gamma} v\right) + d v_t + k v + f, \quad \textrm{on $\mathbb{R}^{+} \times \Gamma_1$},\\ 
            \partial_{\mathbf{n}}u =v_t, \quad \textrm{on $\mathbb{R}^{+} \times \Gamma_1$}, \quad \partial_{\mathbf{n}}u = 0, \quad \textrm{on $\mathbb{R}^{+} \times \Gamma_0$}.
        \end{array} 
\right. 
\end{equation}
Plugging $p=\rho u_t$ into the constraint~\eqref{Co} gives the integral condition as follows
\begin{equation}\label{3}
\rho\int_{\Omega}u_t(t, x)\mathrm{d}x = \frac{1}{\kappa}\int_{\Gamma_1}v(t, x)\mathrm{d}\Gamma,  \quad \forall t\geqslant 0.
\end{equation}

In this paper, we will deal with the constrained potential model~\eqref{1}-\eqref{3} supplemented with an initial condition 
\begin{equation}\label{4}
u(0, x)=u_0(x),\quad u_t(0, x)=u_1(x),\quad v(0, x)=v_0(x),\quad v_t(0, x)=v_1(x). 
\end{equation}
Our first aim is to study the following question of exact controllability for the initial-boundary value problem~\eqref{1}-\eqref{4}
\begin{itemize}
\item (Q1) whether there exists a control function $f$ such that for any initial data and pre-specified target data $(u_{T}, v_{T})$, the corresponding solution $(u, v)$ to~\eqref{1}-\eqref{4} satisfying
\[
\left(u(T, x), v(T, x)\right) = (u_{T}, v_{T})
\]
for some $T>0$.
\end{itemize}
The second aim is to investigate the question of mixing for the problem~\eqref{1}-\eqref{4}, specifically,
\begin{itemize}
\item (Q2) whether the law of the solution $(u, v)$ to~\eqref{1}-\eqref{4} converges to an invariant measure (if exists and unique) as time goes to infinity if $f$ is a random perturbation.
\end{itemize}

\subsection{Related work} 
Let us give a short survey of the literature relevant to the study questions (Q1) and (Q2) for the initial-boundary value problem~\eqref{1}-\eqref{4}.

The physical derivation of acoustic boundary condition ($\sigma = 0$), which is crucial for modeling the interaction between acoustic waves and locally reacting surfaces (cf.~\cite[pp. 259-264]{MI}), was first proposed by Beale and Rosencrans in their seminal work~\cite{BR}. Afterwards, Beale in~\cite{B1,B2} studied the spectral properties of this acoustic wave model posed in both bounded and exterior domains. Based on Beale's work, Littman and Liu~\cite{LL} utilized perturbation theory to study the energy decay and the related
spectral properties of the model in two spatial dimensions with a homogeneous Neumann boundary condition on a portion of the boundary. 
The authors in~\cite{rivera2003polynomial,AN2} obtained polynomial stability for this model via the multiplier method, while~\cite{AN1} investigated strong stability using the semigroup method. Subsequent studies~\cite{Gao2,cavalcanti2020stability,li2025dynamic} provided the well-posedness results and established uniform decay rates by imposing interior localized dampings into the model. Note that in all of them~\cite{rivera2003polynomial,AN2,AN1,Gao2,cavalcanti2020stability,li2025dynamic} the homogeneous Neumann boundary condition is replaced by the homogeneous Dirichlet boundary condition.
Recently, Carri\~ao and Vicente in~\cite{carriao2025stability} studied the acoustic wave model defined in an exterior domain of the disk model of the hyperbolic space, employed the Faedo--Galerkin method to prove the existence of solution, and applied multipliers techniques to study the exponential stability of the model with an interior localized nonlinear damping.

For the case of non-locally reacting surface, Beale in~\cite[Section 6]{B1}  deduced another acoustic boundary condition ($\sigma \neq 0$) on this surface by regarding it as a membrane. However, the complete derivation and comprehensive understanding of this type of boundary condition were provided by Mugnolo and Vitillaro in~\cite[Chapter 7]{mugnolo2024wave}, as mentioned before. Moreover, it should be pointed out that the (constrained) Eulerian model, the Lagrangian model and the (constrained) potential model are derived in~\cite{mugnolo2024wave}. Meanwhile, Vitillaro in~\cite{vitillaro2024three} provided a rigorous study of the relations among these models from a mathematical point of view. In~\cite{mugnolo2024wave}, the authors also proved the well-posedness for the constrained potential model, and studied the long-time behavior of its solutions. In addition, the authors in~\cite{barbieri2022damped, vicente2016uniform,ha2016general,HA2018201,frota2014mixed} have extensively studied the energy decay of potential models with dampings acting both in the domain and on the non-locally reacting surface. It should be noted that the non-locally reacting surface considered in \cite{frota2014mixed} is a two-dimensional bounded manifold with boundary, on which a homogeneous Dirichlet boundary condition is imposed. 

When the locally reacting surface is subject to an extra force ($f\neq 0$), we have investigated questions (Q1) and (Q2) in~\cite{jiao2025mixing}. As a follow-up to that work, the present paper will focus on questions (Q1) and (Q2) for the initial-boundary value problem~\eqref{1}-\eqref{4}. To date, there has been limited work (cf. \cite{avalos2003exact}) on these questions. It is worth noting that only question~(Q1) was addressed in \cite{avalos2003exact}, where the model differs from the one considered herein. A detailed comparison between this paper and related studies in the literature is summarized in Table~\ref{overview}.
\begin{table}[htbp]
\begin{adjustbox}{center}
\scalebox{0.8}{
\begin{tabular}{ccccccc}
\toprule
 Paper& $\Gamma_1$  & $\Gamma_0$ & interior damping & well-posedness & stability & controllability \\
\toprule
\cite{BR,B1,B2}&  $\sigma = 0$, $d\geqslant0$, $k\geqslant 0$, $f = 0$ & $\emptyset$ & /   & \ding{51}  & / & / \\  \cmidrule{2-2}
\cite{LL}& \multirow{5}{*}{$\sigma = 0$, $d>0$, $k\geqslant 0$, $f = 0$}  & NBC & /   &  / &  \ding{51} & / \\  \cmidrule{3-7}
\cite{rivera2003polynomial,AN2,AN1} & & DBC & /   & \ding{51}  &  \ding{51}& / \\
\cite{Gao2}&    & DBC & localized, linear  &  \ding{51} &\ding{51} & / \\
\cite{cavalcanti2020stability,li2025dynamic}&  & DBC & localized, nonlinear   &  \ding{51} &  \ding{51}& /  \\
\cite{carriao2025stability}$^{\dag}$ &    &  $\emptyset$ & localized, nonlinear &  \ding{51} &  \ding{51}& / \\
\midrule
\cite{B1}$^{\natural}$ & {$\sigma \neq 0$, $d\geqslant0$, $k\geqslant 0$, $f = 0$} & NBC & / & \ding{51} & / & / \\  \cmidrule{2-2}
\cite{mugnolo2024wave,vitillaro2024three}& & NBC & / & \ding{51} & \ding{51} & /  \\
\cite{barbieri2022damped}&  \multirow{5}{*}{$\sigma \neq 0$, $d>0$, $k\geqslant 0$, $f = 0$}  & NBC & linear  & \ding{51}  &  \ding{51} & / \\ \cmidrule{3-7}
\cite{vicente2016uniform}&  & DBC & nonlinear & / &\ding{51}  & / \\
\cite{ha2016general,HA2018201}&  & DBC & nonlinear & / & \ding{51} & / \\
\cite{frota2014mixed}$^{\natural}$ &  & DBC & nonlinear & \ding{51} & \ding{51} & / \\
\midrule
\cite{jiao2025mixing} & $\sigma = 0$, $d>0$, $k\geqslant 0$, $f \neq 0$ & DBC, NBC  &/ & \ding{51} & / & \ding{51} \\
\midrule
 \cite{avalos2003exact}$^{\sharp}$ & $\sigma \neq 0$, $d=k=0$, $f \neq 0$ & {NBC}  &/ & / & / & \ding{51} \\
This work & $\sigma \neq 0$, $d>0$, $k\geqslant 0$, $f \neq 0$& NBC & / & \ding{51} & / & \ding{51} \\
\bottomrule
\end{tabular}
}
\end{adjustbox}
\caption{Overview of the investigation about the wave equation with acoustic boundary conditions. DBC: Dirichlet boundary condition. NBC: Neumann boundary condition. The superscript $^{\dag}$ means that the problem studied in the paper is posed in an exterior domain of the disk model of the hyperbolic space; $^{\natural}$ means Dirichlet boundary condition on $\partial\Gamma_1$; $^{\sharp}$ means Neumann boundary condition on $\partial\Gamma_1$.}
\label{overview}
\end{table}

\subsection{Main contributions} 

Our main contribution is twofold. 
\begin{itemize}
\item Using Lions' Hilbert Uniqueness Method (HUM), we establish an equivalence between exact controllability and observability. Thanks to Lemma~\ref{regularity} and Doob's theorem (cf.~\cite[Theorem 5.3.1]{PratoZabczyk}), the mixing property follows from exact controllability through the $t$-regularity property. Consequently, questions (Q1) and (Q2) are answered once the observability inequality is verified, as shown in Theorem~\ref{observability}. The relationship among these four properties can be illustrated in the following diagram.
\begin{equation*}
\scalebox{0.7}{
\xymatrix{
\textrm{Controllability}  & \textrm{HUM}\ar@{->}[r] \ar@{->}[l]& \textbf{Observability}\ar@{->}[2,0]\\
 \textrm{Lemma~\ref{regularity}}\ar@{->}[d] \ar@{->}[u] &  & \\
\textrm{$t$-regularity} \ar@{-}[r] & \textrm{Theorem~\ref{mixing}}\ar@{->}[r]&\textrm{Mixing}
}
}
\end{equation*}
\item As a secondary outcome of our main results, we derive an explicit form of control function that guarantees the property of exact controllability. Moreover, in the special case where $\sigma=0$, the problem~\eqref{1}-\eqref{4} considered in this paper reduces to that in~\cite{jiao2025mixing} so that our main results improve those in~\cite{jiao2025mixing}. 
\end{itemize}

\subsection{Organization of the paper} 

This remaining paper is organized as follows. In Section~\ref{sec:preliminary}, we provide another derivation of wave equation with acoustic boundary conditions, and recall some classical results of control theory for abstract linear system. Subsequently, we give a concise answer to problem (Q1) in Section~\ref{sec:ctrl_obs}. With the aid of the observability inequality proposed in Section~\ref{sec:ctrl_obs}, we address problem (Q2) in Section~\ref{sec:mixing}. Finally, some conclusions are presented in Section~\ref{sec:conclusion}.

\subsection*{Notation}  

Let $\mathcal{H}$ and $\mathcal{U}$ be two Hilbert spaces. 
We shall use the following notations: 
\begin{itemize}
\item $B(\mathcal{H})$ denotes the Borel $\sigma$-algebra over $\mathcal{H}$. 
\item $\mathcal{P}(\mathcal{H})$ is the set of probability Borel measure on $(\mathcal{H}, B(\mathcal{H}))$. 
\item $C_b(\mathcal{H})$ denotes the space of bounded continuous functions on $\mathcal{H}$. 
\end{itemize}
In addition, let $\mathrm{Dom}(\cdot)$ denote the domain of an operator, $\mathrm{Ran}(\cdot)$ denote the range of an operator, and $C(\cdot)$ denote positive numbers, which may depend on the quantities mentioned in the brackets.

\section{Preliminary}
\label{sec:preliminary} 

In this section, we shall give another method to deduce the model~\eqref{1}-\eqref{3} of acoustic motion. Our approach of this derivation above is inspired by the work of~\cite{goldstein2006}. Afterwards, we recall, for the reader's convenience, the classical results of controllability and observability for abstract linear control system (see~\cite[Chapter 2.3-2.4, pp. 51-61]{Coron} and references therein).

\subsection{Derivation via the principle of stationary action}
\label{subsec:derivation}

To derive the equations~\eqref{1}-\eqref{3} from the principle of stationary action, the action functional of the acoustic motion can be modeled by 
\begin{equation*}
\mathbf{S}(u, v) :=\int_{0}^{T}\left[\mathbf{K}(u_t, v_t) - \mathbf{V}(u, v, u_t)\right] \mathrm{d}t, 
\end{equation*}
where
\begin{equation*}
\begin{split}
 \mathbf{K}(u_t, v_t)&:= \frac{1}{2}\int_{\Omega}c^{-2}u_t^2\mathrm{ d}x + \frac{1}{2}\int_{\Gamma_1}\frac{m}{\rho}v_t^2 \mathrm{d}\Gamma, \\
\mathbf{V}(u, v, u_t)&:=\frac{1}{2}\int_{\Omega}|\nabla u|^2 \mathrm{d}x + \frac{1}{2}\int_{\Gamma_1}\frac{\sigma}{\rho}|\nabla_{\Gamma_1}v|^2 \mathrm{d}\Gamma + \frac{1}{2}\int_{\Gamma_1}\frac{k}{\rho}v^2\mathrm{ d}\Gamma + \int_{\Gamma_1}vu_t \mathrm{d}\Gamma.
\end{split}
\end{equation*}

Now, we apply the calculus of variations to find equations of motion for $u$ and $v$ in domain $\overline{\Omega}$. Let $\omega: [0, T]\times\Omega\rightarrow\mathbb{R}$ and $\eta: [0, T]\times\Gamma\rightarrow\mathbb{R}$ with $\omega(0)=\omega(T)=0$ and $\eta(0)=\eta(T)=0$ be variations of $u$ and $v$. Then integrating by parts, one can compute
\begin{equation*}
\begin{split}
&\mathbf{S}(u+\epsilon\omega, v+\epsilon\eta) - \mathbf{S}(u, v)\\
&= \epsilon\int_{0}^{T}\int_{\Omega}\left[ c^{-2}u_t\omega_t - \nabla u\cdot\nabla\omega\right]\mathrm{d}x\mathrm{d}t\\
&\quad+\epsilon\int_{0}^{T}\int_{\Gamma_1}\left[ \frac{m}{\rho} v_t\eta_t -\frac{\sigma}{\rho}\nabla_{\Gamma}v\cdot\nabla_{\Gamma}\eta - \frac{k}{\rho} v\eta - \eta u_t -  v\omega_t \right]\mathrm{d}\Gamma \mathrm{d}t +\mathcal{O}(\epsilon^2)
\end{split}
\end{equation*}
and deduce
\begin{equation}\label{calculus}
\begin{split}
&\mathbf{S}(u+\epsilon\omega, v+\epsilon\eta) - \mathbf{S}(u, v)\\
&= \epsilon\int_{\Omega} \left(c^{-2}[u_t\omega]^{T}_{t=0} + \int_{0}^{T}\left[ -c^{-2}u_{tt} + \triangle u\right]\omega dt\right)\mathrm{d}x\\
&\quad - \epsilon\int_{0}^{T}\int_{\Gamma_0}\partial_{\mathbf{ n}}u w d\Gamma dt - \epsilon\int_{0}^{T}\int_{\Gamma_1}\left(\partial_{\mathbf{ n}}u - v_t\right)\omega \mathrm{d}\Gamma \mathrm{d}t\\
&\quad+\epsilon\int_{\Gamma_1}\left([\frac{m}{\rho} v_t\eta + v w]^{T}_{t=0} + \int_{0}^{T}\left[ -\frac{m}{\rho} v_{tt}\eta + \frac{1}{\rho}\mathrm{div}(\sigma\nabla_{\Gamma}v)\eta - \frac{k}{\rho} v\eta - \eta u_t  \right]\mathrm{d}t\right)\mathrm{d}\Gamma +\mathcal{O}(\epsilon^2)\\
&= \epsilon\int_{\Omega}\int_{0}^{T}\left[ -c^{-2}u_{tt} + \triangle u\right]\omega dtdx - \epsilon\int_{0}^{T}\int_{\Gamma_0}\partial_{\mathbf{ n}}u w \mathrm{d}\Gamma \mathrm{d}t - \epsilon\int_{0}^{T}\int_{\Gamma_1}\left(\partial_{\mathbf{ n}}u - v_t\right)\omega \mathrm{d}\Gamma \mathrm{d}t\\
&\quad+\epsilon\int_{\Gamma_1}\int_{0}^{T}\left[ -\frac{m}{\rho} v_{tt}\eta + \frac{1}{\rho}\mathrm{div}(\sigma\nabla_{\Gamma}v)\eta - \frac{k}{\rho} v\eta - \eta u_t  \right]\mathrm{d}t\mathrm{d}\Gamma +\mathcal{O}(\epsilon^2).
\end{split}
\end{equation}
The principle of stationary action states that the transition of the acoustic motion from an initial state at $t=0$ to a final state $t=T$ is the one for which the action functional $\mathbf{S}(u, v)$ is stationary to first order, which implies all terms in $\mathcal{O}(\epsilon)$ vanish for arbitrary $\omega$ and $\eta$. Therefore, equation~\eqref{calculus} leads to the potential model~\eqref{1}-\eqref{2}.

To derive the constraint~\eqref{3}, we should notice that
\begin{equation*} 
\begin{split}
	\frac{\mathrm{d}}{\mathrm{d}t}\left(\rho\int_{\Omega}u_t \mathrm{d}x - \frac{1}{\kappa}\int_{\Gamma_1}v \mathrm{d}\Gamma \right) &= \rho\int_{\Omega}u_{tt} \mathrm{d}x - \frac{1}{\kappa}\int_{\Gamma_1}v_t \mathrm{d}\Gamma\\
	&= \frac{1}{\kappa}\left(\int_{\Omega}\triangle u \mathrm{d}x - \int_{\Gamma_1}\partial_{\mathbf{n}}u \mathrm{d}\Gamma\right) =0,
\end{split}
\end{equation*}
which implies
\begin{equation}\label{Hooke} 
	\rho\int_{\Omega}u_t \mathrm{d}x - \frac{1}{\kappa}\int_{\Gamma_1}v \mathrm{d}\Gamma = \rho\int_{\Omega}u_1 \mathrm{d}x - \frac{1}{\kappa}\int_{\Gamma_1}v_0 \mathrm{d}\Gamma.
\end{equation}
This concludes that the velocity potential $u$ and the boundary displacement $v$ satisfy the constraint given by equation~\eqref{3}, provided that the initial data conform to the Hooke's law.

\subsection{Abstract linear system}
\label{subsec:obs}

Let $\mathcal{H}$ be the state space and $\mathcal{U}$ be the control space.
The control system we consider here is
\begin{equation} 
\left
    \{
        \begin{array}{ll}
            \mathbf{Y}^{\prime}(t)=\mathcal{A}\mathbf{Y}(t)+ \mathcal{B}\mathbf{u}(t), \quad t\in[0, T],\\
            \mathbf{Y}(0)=\mathbf{y}_0 \in \mathcal{H},  
        \end{array} \label{eq:controlled}
\right.
\end{equation}
where the state is $\mathbf{ Y}(t)\in \mathcal{H}$ and the control is $\mathbf{u}(t)\in\mathcal{U}$. In addition, the following assumptions are imposed on this system.
\begin{itemize}
\item $\mathcal{A}$ is the infinitesimal generator of a strongly continuous semigroup $S(t)$, $t\geqslant 0$, on $\mathcal{H}$.
\item $\mathcal{B}$ is a linear continuous mapping from $\mathcal{U}$ into $\mathcal{H}$.
\end{itemize}
As usual, we call the system~\eqref{eq:controlled} is \emph{exactly controllable} in time $T$ if, for any $\mathbf{y}_0$, $\mathbf{y}_1\in \mathcal{H}$, there exists a control function $\mathbf{u}\in \mathcal{U}$ such that the solution of the system~\eqref{eq:controlled} satisfies $\mathbf{Y} (T) = \mathbf{y}_1$. 

One easily sees that the exact controllability of system~\eqref{eq:controlled} implies it is \emph{null controllable} in time $T$, which means given any $\mathbf{y}_0\in \mathcal{H}$, there exists a control function $\mathbf{u}\in \mathcal{U}$ such that $\mathbf{Y} (T)=0$. 
The converse is true due to the fact that $S(t)$ is strongly continuous. 
Indeed, for every $\mathbf{y}_0\in \mathcal{H}$ and for every $\mathbf{y}_1\in \mathcal{H}$, by employing the null controllability of system~\eqref{eq:controlled} to the initial data $\mathbf{y}_0 - S(-T)\mathbf{y}_1$ we know there exists a control $\widetilde{\mathbf{u}}\in \mathcal{U}$ such that the solution of the following system
\begin{equation*} 
{\widetilde{\mathbf{Y}}}^{\prime}(t)=A(\widetilde{\mathbf{Y}}(t))+ B\widetilde{\mathbf{u}}(t), \quad \widetilde{\mathbf{Y}}(0)=\mathbf{y}_0 - S(-T)\mathbf{y}_1
\end{equation*}
satisfies $\widetilde{\mathbf{Y}}(T) = 0$. Assume that $\mathbf{Y}(t)$ is the solution of the problem
\begin{equation*} 
{\mathbf{Y}}^{\prime}(t)=A(\mathbf{Y}(t))+ B\widetilde{\mathbf{u}}(t), \quad \mathbf{Y}(0)=\mathbf{y}_0. 
\end{equation*}
Thus, we obtain $ \mathbf{Y}(t) - \widetilde{\mathbf{Y}}(t) = S(t)S(-T)\mathbf{y}_1 = S(t-T)\mathbf{y}_1$, which implies $ \mathbf{Y}(T) = \mathbf{y}_1$. This concludes that the system~\eqref{eq:controlled} possesses the property of exact controllability in time $T$.

Subsequently, we shall provide a sufficient and necessary condition for the exact controllability of the system~\eqref{eq:controlled}. Let us define a linear map
\[
\mathcal{F}_{T}: \mathcal{U}\rightarrow \mathcal{H}, \quad \mathcal{F}_{T}\mathbf{ u} = \check{\mathbf{Y}}(T),
\]
where $\check{ \mathbf{Y}}(t)\in C([0, T]; \mathcal{H})$ is the solution of the system~\eqref{eq:controlled} with $\mathbf{y}_0=0$.
Note that Ran$F_T$ consists of all states reachable in time $T$ from the zero state. 
Assume that $\mathcal{F}_{T}$ is onto. Let $\mathbf{y}_0$, $\mathbf{y}_1\in {H}$ and $\widehat{\mathbf{Y}}(t)$ be the solution of the problem
\begin{equation*} 
\widehat{\mathbf{Y}}^{\prime}(t)=A(\widehat{\mathbf{Y}}(t)), \quad \widehat{\mathbf{Y}}(0)=\mathbf{y}_0. 
\end{equation*}
By the onto property of $\mathcal{F}_{T}$, there exists a function $\widehat{\mathbf{u}}\in \mathcal{U}$ such that $\mathcal{F}_{T}\widehat{\mathbf{u}} = \check{\mathbf{Y}}(T) = \mathbf{y}_1 - \widehat{\mathbf{Y}}(T)$. Suppose that $\mathbf{Y}(t)$ is the solution of the problem
\begin{equation*} 
{\mathbf{Y}}^{\prime}(t)=A(\mathbf{Y}(t))+ B\widehat{\mathbf{u}}(t), \quad \mathbf{Y}(0)=\mathbf{y}_0. 
\end{equation*}
Then we deduce $\mathbf{Y}(t) = \widehat{\mathbf{Y}}(t) + \check{ \mathbf{Y}}(t)$ and $\mathbf{Y}(T) = \mathbf{y}_1$,
which proves that the system~\eqref{eq:controlled} is exactly controllable in time $T$. Obviously, due to the definition of $\mathcal{F}_{T}$, the property of exact controllability yields $\mathcal{F}_{T}$ is onto. In conclusion, it follows that the system~\eqref{eq:controlled} is exactly controllable in time $T$ if and only if $\mathcal{F}_{T}$ is onto. 

Let us recall a classical result of functional analysis (see \cite[Theorem 4.15, pp. 97]{rudin1991functional}) that $\mathcal{F}_{T}$ is onto if and only if there exists $c>0$ such that
\begin{equation} \label{ineq:obs}
\|\mathcal{F}^{\ast}_{T}(z)\|_{\mathcal{U}} \geqslant c\|z\|_{\mathcal{H}}, \quad \forall z\in \mathcal{H}.
\end{equation}
Inequality \eqref{ineq:obs} is usually called \emph{observability inequalities} for the linear control system~\eqref{eq:controlled}. Therefore, in order to establish the exact controllability of the system as~\eqref{eq:controlled}, one only needs to prove the corresponding observability inequality.

\section{Controllability and Observability}
\label{sec:ctrl_obs}
In the following, we shall apply the general framework presented in Section~\ref{subsec:obs} to answer (Q1) for problem~\eqref{1}-\eqref{4}. There are two key points that need to be addressed: first, selecting appropriate state and control spaces; second, proving the observability inequality.

\subsection{Abstract evolution equation}
\label{subsec:evolution}

The problem~\eqref{1}-\eqref{4} will be written as an abstract evolution equation. To do so, we now introduce the phase space
\[
\mathcal{H}_0 = H^1(\Omega)\times H^1(\Gamma_1) \times L^2(\Omega) \times L^2(\Gamma_1).
\]
Let $H^1_{c}(\Omega)$ denote the quotient space of $H^1(\Omega)$ modulo constants, which is isomorphic to $\dot{H}^1(\Omega)$ given by
\[
\dot{H}^1(\Omega) := \left\{\phi(x)\in H^1(\Omega): \int_{\Omega}\phi(x)\mathrm{d}x=0\right\}. 
\]
Then $H^1(\Omega)$ has the standard orthogonal splitting $H^1(\Omega) = \dot{H}^1(\Omega)\bigoplus \mathbb{C}$.
Accordingly, our phase space has the corresponding orthogonal splitting 
\[
\mathcal{H}_0 = \mathbb{C}_1\bigoplus \mathcal{H}_1,
\] 
where $\mathbb{C}_1$ and $\dot{\mathcal{H}}$ are given by
\[
	\mathbb{C}_1= \mathbb{C}1_{\mathcal{H}}, \quad 1_{\mathcal{H}} = (1, 0, 0, 0), \quad \mathcal{H}_1 = \dot{H}^1(\Omega)\times H^1(\Gamma_1) \times L^2(\Omega) \times L^2(\Gamma_1).
\]
To deal with the constraint~\eqref{3}, we shall also set
\[
	{\mathcal{H}_2} :=\left\{\mathbf{ f}=(f_1, f_2, f_3, f_4)\in\mathcal{H}_1: \rho\int_{\Omega}f_3 \mathrm{d}x = \frac{1}{\kappa}\int_{\Gamma_1}f_2 \mathrm{d}\Gamma\right\}.
\]

Let $\mathcal{H} := {\mathcal{H}_1}\bigcap {\mathcal{H}_2}$.
In this case, we equip ${\mathcal{H}}$ with the following inner product
\[
	(\mathbf{ f}, \mathbf{ g})_{\mathcal{H}}=\int_{\Omega}\left[c^{-2}f_3{\overline{g_{3}}} +\nabla f_1\cdot{\nabla \overline{g_1}}\right]\mathrm{d}x+ \int_{\Gamma_{1}}\left[\frac{m}{\rho}f_4{\overline{g_{4}}} + \frac{\sigma}{\rho}\nabla_{\Gamma} f_2\cdot{\nabla_{\Gamma} \overline{g_2}} + \frac{k}{\rho}f_2{\overline{g_{2}}}\right]\mathrm{d}\Gamma,
\]
where $\mathbf{ f} =(f_1, f_2, f_3, f_4)\in\mathcal{H}$, and $\mathbf{ g } = (g_1, g_2, g_3, g_4)\in\mathcal{H}$.
This inner product induces a norm, denoted by $\|\cdot\|_{\mathcal{H}}$. Here, we highlight that the strategy for constructing an appropriate space for our problems is based on the fascinating work presented in~\cite[Chapter 6]{mugnolo2024wave}. 

We now define an operator $\mathcal{A}: \mathrm{Dom}(\mathcal{A})\subset \mathcal{H}\rightarrow \mathcal{H}$ by
\begin{equation*}       
\mathcal{A}\mathbf{ f}=
\left(                 
  \begin{array}{c}   
    f_3\\  
    f_4\\  
    c^2\Delta f_1 \\
     \frac{1}{m}\left[\mathrm{div}(\sigma\nabla_{\Gamma}f_2) -d f_4-k f_2 -\rho f_3|_{\Gamma_1}\right] \\
  \end{array}
\right)                       
\end{equation*}
with the domain of the operator $\mathcal{A}$ 
\begin{equation*} 
\begin{split}
	\mathrm{Dom}(\mathcal{A})=\Big\{\mathbf{f}\in [\dot{H}^1(\Omega)\times H^1(\Gamma_1)]^2: & \quad \Delta f_1\in L^2(\Omega), \quad \partial_{\mathbf{n}} f_1|_{\Gamma_0}=0, \quad \partial_{\mathbf{n}} f_1|_{\Gamma_1}=f_4,\\
	&\quad  \rho\int_{\Omega}f_3 \mathrm{d}x = \frac{1}{\kappa}\int_{\Gamma_1}f_2 \mathrm{d}\Gamma, \quad \mathrm{div}(\sigma\nabla_{\Gamma}f_2)\in L^2(\Gamma_1)\Big\}.
\end{split}
\end{equation*}
Here, $ f_3|_{\Gamma_1}$ in the last component of $\mathcal{A}\mathbf{ f}$ should be understood as the trace of $f_3$ on $\Gamma_1$, and the condition $\partial_{\mathbf{n}} f_1|_{\Gamma_0}=0$ and $\partial_{\mathbf{n}} f_1|_{\Gamma_1}=f_4$ are interpreted in the weak sense
\begin{eqnarray*}
	\int_{\Omega}\left[\Delta f_1 {v} +\nabla f_1\cdot {\nabla v}\right]\mathrm{d}x =\int_{\Gamma_1}f_4{v} \mathrm{d}\Gamma,  \quad \forall v\in H^1(\Omega).
\end{eqnarray*}
From $\rho\int_{\Omega}c^2\Delta f_1\mathrm{d}x = \frac{1}{\kappa}\int_{\Gamma_1}f_4 \mathrm{d}\Gamma$, we can deduce $\mathcal{A}\mathbf{ f}\in \mathcal{H}$. Then the construction of $\mathcal{A}$ is reasonable and necessary for further analysis.

By setting $\mathbf{X}(t)=\left(u, v, u_t, v_t\right)^\top$ and $\mathbf{x} = (u_0, v_0, u_1, v_1)^\top$, the initial-boundary value problem~\eqref{1}-\eqref{4} can be formally reformulated to be the following evolution equation in $\mathcal{H}$
\begin{equation} \label{evolution}
\left
    \{
        \begin{array}{ll}
            \mathbf{X}^{\prime}=\mathcal{A}\mathbf{X} + \mathcal{B}\mathbf{ u},\\
            \mathbf{X}(0)=\mathbf{x},    
        \end{array}
\right.
\end{equation}
where $\mathcal{B} = \mathrm{diag}\left\{0, 0, 0, \frac{1}{m} \right\}$ and $\mathbf{ u} = f\in \mathcal{U}$.
Subsequently, an important property of the operator $\mathcal{A}$ are provided by the following lemma.
\begin{proposition}\label{prop:operator}
$\mathcal{A}$ is a densely defined closed linear operator, which is the infinitesimal generator of a $C_0$-semigroup of contractions on $\mathcal{H}$, denoted by ${S}(t)$, $t\geqslant 0$.
\end{proposition}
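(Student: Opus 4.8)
The plan is to apply the Lumer--Phillips theorem: I will show that $\mathcal{A}$ is dissipative on $\mathcal{H}$, that $\mathrm{Ran}(\lambda I-\mathcal{A})=\mathcal{H}$ for some (hence all) $\lambda>0$, and that $\mathcal{A}$ is densely defined and closed. The dissipativity of $\mathcal{A}$ is the natural energy computation: for $\mathbf{f}\in\mathrm{Dom}(\mathcal{A})$ with $\mathbf{f}=(f_1,f_2,f_3,f_4)$, one computes
\begin{equation*}
\mathrm{Re}(\mathcal{A}\mathbf{f},\mathbf{f})_{\mathcal{H}}
= \mathrm{Re}\int_{\Omega}\left[\Delta f_1\,\overline{f_3} + \nabla f_3\cdot\overline{\nabla f_1}\right]\mathrm{d}x
+ \mathrm{Re}\int_{\Gamma_1}\left[\tfrac{1}{m}\big(\mathrm{div}(\sigma\nabla_\Gamma f_2)-df_4-kf_2-\rho f_3|_{\Gamma_1}\big)\tfrac{m}{\rho}\overline{f_4}+\tfrac{\sigma}{\rho}\nabla_\Gamma f_4\cdot\overline{\nabla_\Gamma f_2}+\tfrac{k}{\rho}f_4\overline{f_2}\right]\mathrm{d}\Gamma .
\end{equation*}
Using the weak formulation of $\partial_{\mathbf{n}}f_1|_{\Gamma_1}=f_4$, $\partial_{\mathbf{n}}f_1|_{\Gamma_0}=0$ (which converts $\int_\Omega\Delta f_1\,\overline{f_3}$ into $-\int_\Omega\nabla f_1\cdot\overline{\nabla f_3}+\int_{\Gamma_1}f_4\overline{f_3}$) and integrating the surface divergence by parts on the closed manifold $\Gamma_1$, the indefinite terms cancel in conjugate pairs and one is left with $\mathrm{Re}(\mathcal{A}\mathbf{f},\mathbf{f})_{\mathcal{H}}=-\tfrac{d}{\rho}\int_{\Gamma_1}|f_4|^2\,\mathrm{d}\Gamma\leqslant 0$, so $\mathcal{A}$ is dissipative. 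Dense definedness is routine since $\mathrm{Dom}(\mathcal{A})$ contains the usual smooth functions compatible with the constraints, and closedness will follow automatically once maximality is established (closed range plus dissipativity).

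The main work is surjectivity: given $\mathbf{g}=(g_1,g_2,g_3,g_4)\in\mathcal{H}$ I must solve $(I-\mathcal{A})\mathbf{f}=\mathbf{g}$, i.e. $f_1-f_3=g_1$, $f_2-f_4=g_2$, $f_3-c^2\Delta f_1=g_3$, and $f_4-\tfrac{1}{m}[\mathrm{div}(\sigma\nabla_\Gamma f_2)-df_4-kf_2-\rho f_3|_{\Gamma_1}]=g_4$. Eliminating $f_3=f_1-g_1$ and $f_4=f_2-g_2$ reduces this to a coupled elliptic system for $(f_1,f_2)$:
\begin{equation*}
f_1-c^2\Delta f_1 = g_1+g_3 \ \text{ in }\Omega,\qquad
(m+d+k)f_2-\mathrm{div}(\sigma\nabla_\Gamma f_2)+\rho f_1|_{\Gamma_1} = mg_4+(m+d)g_2+\rho g_1|_{\Gamma_1}\ \text{ on }\Gamma_1,
\end{equation*}
together with $\partial_{\mathbf{n}}f_1|_{\Gamma_0}=0$ and $\partial_{\mathbf{n}}f_1|_{\Gamma_1}=f_2-g_2$. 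I would set this up variationally on the space $\dot H^1(\Omega)\times H^1(\Gamma_1)$: define a bounded sesquilinear form incorporating $\int_\Omega(f_1\bar\phi+c^2\nabla f_1\cdot\overline{\nabla\phi})$, the surface terms, and the coupling $\rho\int_{\Gamma_1}f_1\bar\psi - \rho c^2\int_{\Gamma_1} ? $ — one must choose the test-function pairing (e.g. test the bulk equation with $\phi$ and the boundary equation with $\tfrac{\rho c^2}{m}\psi$ or similar, so the boundary-coupling terms appear symmetrically) to make the form coercive via Poincar\'e on $\dot H^1(\Omega)$ and the positivity of $m,d$; then invoke Lax--Milgram. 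The resulting weak solution has $\Delta f_1\in L^2(\Omega)$ and $\mathrm{div}(\sigma\nabla_\Gamma f_2)\in L^2(\Gamma_1)$ by the equations, and the Neumann traces hold in the weak sense built into the problem, so $\mathbf{f}\in\mathrm{Dom}(\mathcal{A})$; a final check confirms $\mathbf{f}\in\mathcal{H}$, which should reduce (as with $\mathcal{A}\mathbf{f}\in\mathcal{H}$ above) to the identity $\rho\int_\Omega c^2\Delta f_1=\tfrac1\kappa\int_{\Gamma_1}f_4$, i.e.\ to the compatibility already guaranteed because $\mathbf{g}\in\mathcal{H}$ and integrating the $f_1$-equation over $\Omega$.

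The hard part is the coercivity of the sesquilinear form: the coupling term $\rho\int_{\Gamma_1}f_1|_{\Gamma_1}\,\overline{\psi}$ is not sign-definite, so I expect to need a careful choice of the relative weights in the two test functions so that the coupling contributions cancel or combine into a nonnegative quantity (the same cancellation that produces the clean dissipativity identity), after which coercivity comes from the $\|f_1\|_{L^2}^2+\|\nabla f_1\|_{L^2}^2$ and the $\tfrac1\rho$-weighted $L^2(\Gamma_1)$ and $H^1(\Gamma_1)$ terms with the strictly positive $d$. One must also handle the degeneracy when $\sigma=0$ (so the $\Gamma_1$-part only controls $\|f_2\|_{L^2(\Gamma_1)}$, which is still enough) and when $k=0$; neither causes a real problem. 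Once maximality is in hand, Lumer--Phillips gives the contraction $C_0$-semigroup $S(t)$, and closedness of $\mathcal{A}$ is part of that conclusion.
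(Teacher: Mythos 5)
Your proposal is correct in outline but follows a genuinely different route from the paper. You argue via Lumer--Phillips: dissipativity (your energy identity $\Re(\mathcal{A}\mathbf{f},\mathbf{f})_{\mathcal{H}}=-\tfrac{d}{\rho}\int_{\Gamma_1}|f_4|^2\,\mathrm{d}\Gamma$ is exactly the paper's computation) plus the range condition $\mathrm{Ran}(I-\mathcal{A})=\mathcal{H}$, which you reduce to a coupled elliptic system for $(f_1,f_2)$ solved by Lax--Milgram. The paper never proves a range condition at all: it shows $\mathcal{A}$ is densely defined and dissipative, proves closability by a contradiction argument based on the dissipativity estimate, computes the adjoint $\mathcal{A}^{\ast}$ explicitly and checks that it is also dissipative, and then invokes Pazy's criterion (Corollary 4.4, Chapter 1: a densely defined closed operator with $\mathcal{A}$ and $\mathcal{A}^{\ast}$ both dissipative generates a contraction semigroup). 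Your approach buys a constructive resolvent solvability and avoids identifying $\mathrm{Dom}(\mathcal{A}^{\ast})$; the paper's buys brevity, since no elliptic problem has to be solved, at the price of the adjoint computation and the closability argument. Two details you leave open do resolve, and you should say so explicitly rather than with a question mark: with the natural energy weights (test the bulk equation against $c^{-2}\bar\phi$ and the boundary equation against $\tfrac{1}{\rho}\bar\psi$) the coupling terms are $-\int_{\Gamma_1}f_2\bar\phi+\int_{\Gamma_1}f_1\bar\psi$, which on the diagonal are purely imaginary, so the real part of the form is coercive without any further balancing; and the membership $\mathbf{f}\in\mathcal{H}$ (the constraint~\eqref{3} and the zero-mean normalization built into $\dot H^1(\Omega)$) must be checked from the equation $f_1-c^2\Delta f_1=g_1+g_3$ integrated over $\Omega$ together with $\mathbf{g}\in\mathcal{H}$, which is slightly more delicate than your one-line remark suggests given the quotient-space setup, but it is the same bookkeeping the paper performs when verifying $\mathcal{A}\mathbf{f}\in\mathcal{H}$.
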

\begin{proof}
Note that $\mathrm{Dom}(\mathcal{A})$ is dense in $\mathcal{H}$. Then $\mathcal{A}$ is densely defined. Moreover, we can uniquely define the adjoint $\mathcal{A}^{\ast}$ of $\mathcal{A}$ by
\begin{equation} \label{duality}     
\mathcal{A}^{\ast}\mathbf{ f}=
\left(                 
  \begin{array}{c}   
    -f_3\\  
    -f_4\\  
    -c^2\Delta f_1 \\
     -\frac{1}{m}\left[\mathrm{div}(\sigma\nabla_{\Gamma}f_2) + d f_4-k f_2 -\rho f_3|_{\Gamma_1}\right] \\
  \end{array}
\right),                        
\end{equation}
whose domain is given by
\begin{equation*} 
\begin{split}
	\mathrm{Dom}(\mathcal{A}^{\ast})=\Big\{\mathbf{g}\in [\dot{H}^1(\Omega)\times H^1(\Gamma_1)]^2: & \quad \Delta g_1\in L^2(\Omega), \quad \partial_{\mathbf{n}} g_1|_{\Gamma_0}=0, \quad \partial_{\mathbf{n}} g_1|_{\Gamma_1}=g_4,\\
	&\quad  \rho\int_{\Omega}g_3 \mathrm{d}x = \frac{1}{\kappa}\int_{\Gamma_1}g_2 \mathrm{d}\Gamma, \quad \mathrm{div}(\sigma\nabla_{\Gamma}g_2)\in L^2(\Gamma_1)\Big\}.
\end{split}
\end{equation*}

Due to the calculation
\begin{equation*}  
\begin{split}
(\mathcal{A}\mathbf{ f}, \mathbf{f})_{\mathcal{H}} =& \int_{\Omega}\left[c^{-2}c^2\Delta f_1{\overline{f_{3}}} +\nabla f_3\cdot{\nabla \overline{f_1}}\right]\mathrm{d}x+ \int_{\Gamma_{1}}\left[\frac{\sigma}{\rho}\nabla_{\Gamma} f_4\cdot{\nabla_{\Gamma} \overline{f_2}} + \frac{k}{\rho}f_4{\overline{f_{2}}}\right]\mathrm{d}\Gamma\\
&+\int_{\Gamma_{1}}\frac{m}{\rho}  \frac{1}{m}\left[\mathrm{div}(\sigma\nabla_{\Gamma}f_2) -d f_4-k f_2 -\rho f_3\right]  {\overline{f_{4}}} \mathrm{d}\Gamma\\
=& \int_{\Gamma_1} f_4{\overline{f_{3}}}d\Gamma - \int_{\Gamma_{1}}f_3 {\overline{f_{4}}} \mathrm{d}\Gamma + \int_{\Gamma_{1}}\frac{k}{\rho}\left[f_4{\overline{f_{2}}} - f_{2}\overline{ f_4} \right]\mathrm{d}\Gamma - \int_{\Gamma_{1}}\frac{d}{\rho}f_4 {\overline{f_{4}}} \mathrm{d}\Gamma\\
&+\int_{\Omega}\left[-\nabla f_1{\overline{f_{3}}} +\nabla f_3\cdot{\nabla \overline{f_1}}\right]\mathrm{d}x+\int_{\Gamma_{1}}\frac{\sigma}{\rho}\left[\nabla_{\Gamma} f_4\cdot{\nabla_{\Gamma} \overline{f_2}} - {\nabla_{\Gamma}{f_2}} \cdot \nabla_{\Gamma} \overline{f_4}\right]\mathrm{d}\Gamma
\end{split}
\end{equation*}
for any $\mathbf{ f}\in\mathrm{Dom}(\mathcal{A})$, we get 
\[
\Re(\mathcal{A}\mathbf{ f}, \mathbf{f})_{\mathcal{H}} = - \int_{\Gamma_{1}}\frac{d}{\rho}|f_4 |^2 \mathrm{d}\Gamma \leqslant 0,
\]
which means $\mathcal{A}$ is dissipative. 

To show $\mathcal{A}$ is closable, we use the method of contradiction. Suppose it is not true. Then there is a sequence $\mathbf{f}_n\in\mathrm{Dom}(\mathcal{A})$ such that $\mathbf{f}_n\rightarrow 0$ and $\mathcal{A}\mathbf{f}_n\rightarrow \mathbf{h}$ with $\|\mathbf{h}\|_\mathcal{H}=1$. From the fact that $\mathcal{A}$ is dissipative, it follows that for every $\lambda>0$ and $\mathbf{f}\in \mathrm{Dom}(\mathcal{A})$
 \begin{eqnarray*}
\begin{aligned}
	\|(\frac{1}{\lambda}-\mathcal{A})(\mathbf{f}+\lambda^{-1}\mathbf{f}_n)\|\geq \frac{1}{\lambda}\|\mathbf{f}+\lambda^{-1}\mathbf{f}_n\|,
\end{aligned}
\end{eqnarray*}
that is,
 \begin{eqnarray*}
\begin{aligned}
	\|(\mathbf{f}+\lambda^{-1}\mathbf{f}_n)-(\lambda \mathcal{A}\mathbf{f}+ \mathcal{A}\mathbf{f}_n)\|\geq\|\mathbf{f}+\lambda^{-1}\mathbf{f}_n\|.
\end{aligned}
\end{eqnarray*}
Letting $n\rightarrow\infty$ and then $\lambda\rightarrow 0$ gives $\|\mathbf{f}-\mathbf{h}\|\geq\|\mathbf{f}\|$, which is impossible obviously. Thus, we prove $\mathcal{A}$ is closable. 

For any $\mathbf{g}\in\mathrm{Dom}(\mathcal{A}^{\ast})$, we can compute
\[
\Re(\mathcal{A}^{\ast}\mathbf{g}, \mathbf{g})_{\mathcal{H}} = - \int_{\Gamma_{1}}\frac{d}{\rho}|g_4 |^2 \mathrm{d}\Gamma \leqslant 0,
\] 
which implies $\mathcal{A}^{\ast}$ is also dissipative.
From Corollary 4.4 in Chapter 1 of~\cite{Pazy}, it concludes that $\mathcal{A}$ is the infinitesimal generator of a $C_0$ semigroup of contractions on $\mathcal{H}$.
\end{proof}

In the sequel, we shall denote by $S^{\ast}(t)$ the adjoint of $S(t)$ and by $\mathcal{B}^{\ast}$ the adjoint of $\mathcal{B}$. By using~\cite[Corollary 10.6, pp. 41]{Pazy}, we know $S^{\ast}(t)$ is a strongly continuous semigroup of continuous linear operators and the infinitesimal generator of this semigroup is the operator $\mathcal{A}^\ast$.

Now, we recall the definition of the strong and weak solution of equation~\eqref{evolution} (see, for example,~\cite[Chapter 4.2, pp. 105-109]{Pazy}). 
A vector function $\mathbf{X}\in C^1([0, T]; \mathcal{H})$ is said to be a strong solution of equation~\eqref{evolution} on $[0, T]$ if $\mathbf{X}(t)\in\mathrm{Dom}(\mathcal{A})$, and~\eqref{evolution} is satisfied on $[0, T]$. We notice that $S(t-s)\mathcal{B}\mathbf{ u}$ is integrable with respect to $s$ if $\mathbf{ u} \in \mathcal{U}\cap L^2([0, T]; \mathcal{H})$. Then a vector function $\mathbf{X}\in C([0, T]; \mathcal{H})$ given by
\[
\mathbf{X}(t)= S(t)\mathbf{ x} + \int_{0}^{t}S(t-s)\mathcal{B}\mathbf{ u}\mathrm{d}s, \quad 0\leqslant t \leqslant T
\]
is well-defined, which is called a mild solution of equation~\eqref{evolution} on $[0, T]$. In addition, a vector function $\mathbf{X}\in C([0, T]; \mathcal{H})$ is a weak solution of equation~\eqref{evolution} on $[0, T]$ if for every ${\mathbf{ Z}}\in\mathrm{Dom}(\mathcal{A}^{\ast})$ the function $(\mathbf{X}(t), {\mathbf{Z}})_{\mathcal{H}}$ is absolutely continuous on $[0, T]$ and
\[
\frac{d}{dt}(\mathbf{X}(t), {\mathbf{Z}})_{\mathcal{H}} = (\mathbf{X}(t), \mathcal{A}^{\ast}{\mathbf{Z}})_{\mathcal{H}} +(\mathcal{B}\mathbf{ u}, {\mathbf{Z}})_{\mathcal{H}}, \quad \textrm{a.e. on $[0, T]$}.
\]
From Proposition~\ref{prop:operator} and the result in~\cite{ball1977strongly}, it follows that the mild solution is identical with a weak solution of equation~\eqref{evolution}. Obviously, strong solutions are also weak ones. 

In the context, a pair $(u, v)$ is considered a strong or weak solution to problem~\eqref{1}-\eqref{4} if they are the first two components of the vector function $\mathbf{X}(t)=\left(u, v, u_t, v_t\right)$, which is a solution to equation~\eqref{evolution} of the same type. Thus, one immediately gets that any strong solution $(u, v)$ of problem~\eqref{1}-\eqref{4} belongs to $C^1([0, T]; H^1(\Omega)\times H^1(\Gamma_1))\cap C^2([0, T]; L^2(\Omega)\times L^2(\Gamma_1))$, and the corresponding strong solution $\mathbf{X}(t)$ of equation~\eqref{evolution} is precisely $\left(u, v, u_t, v_t\right)$.

\subsection{Observability inequality}
\label{subsec:observability}

We are going to investigate the following homogeneous backward system of the evolution equation~\eqref{evolution}.
\begin{equation} \label{adjoint:abstract}
            \mathbf{Z}^{\prime}(t)= -\mathcal{A}^{\ast}(\mathbf{ Z}(t)),\quad t\in[0, T],\quad \mathbf{ Z}(T) = \mathbf{ z}\in\mathrm{Dom}(\mathcal{A}^{\ast})\subset\mathcal{H}.
\end{equation} 
So, setting $\mathbf{ Z}(t) = (\phi, \delta, \phi_t, \delta_t)$ and $\mathbf{ z}=(\phi_0, \delta_0, \phi_1, \delta_1)$, the backward system can be explicitly written as
\begin{equation} \label{adjoint}
\left
    \{
        \begin{array}{ll}
        		\phi_{tt} = c^2\triangle\phi, \quad \textrm{in $\mathbb{R}^{+} \times \Omega$},\\
            -\rho \phi_t= m \delta_{tt} - \mathrm{div}_{\Gamma}\left(\sigma\nabla_{\Gamma} \delta\right) - d \delta_t + k \delta, \quad \textrm{on $\mathbb{R}^{+} \times \Gamma_1$},\\ 
            \partial_{\mathbf{n}}\phi =\delta_t, \quad \textrm{on $\mathbb{R}^{+} \times \Gamma_1$}, \quad \partial_{\mathbf{n}}\phi = 0, \quad \textrm{on $\mathbb{R}^{+} \times \Gamma_0$},\\
            \left(\phi(T, \cdot), \delta(T, \cdot), \phi_t(T, \cdot), \delta_t(T, \cdot)\right) = (\phi_0, \delta_0, \phi_1, \delta_1), \quad \textrm{in $ \Omega$}.
        \end{array} 
\right.
\end{equation}

The following theorem presents a key estimate for the solution of system~\eqref{adjoint}, which is the core to prove our main results.
\begin{theorem}\label{observability}
Under the following geometrical assumptions on the boundary of the domain
\begin{equation} \label{G1}
	\Gamma_0 =\{x\in\Gamma: (x-x_0)\cdot\mathbf{n}\leq 0 \},\quad \Gamma_1 =\{x\in\Gamma: (x-x_0)\cdot\mathbf{n}\geq c \}
\end{equation}
for some point $x_0\in\mathbb{R}^3$ and some constant $c>0$. The strong solution $(\phi, \delta)$ of the backward system~\eqref{adjoint} satisfies
\begin{equation} \label{ineq:obs_abc}
	\|(\phi_0, \delta_0, \phi_1, \delta_1)\|^2_{\mathcal{H}} \leqslant C_{T}\int_{0}^{T}\left\{ \|\delta_t\|^2_{L^2(\Gamma_1)}+\|\delta_{tt}\|^2_{L^2(\Gamma_1)}\right\}\mathrm{d}t.
\end{equation}
\end{theorem}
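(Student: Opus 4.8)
The plan is to combine the energy method with the classical Rellich--Morawetz multiplier technique, disposing of the unavoidable lower-order remainders by a compactness--uniqueness argument. By density it suffices to prove \eqref{ineq:obs_abc} for strong solutions, so we may differentiate in time and use traces freely. \emph{Energy reductions.} Put $E(t):=\tfrac12\|\mathbf Z(t)\|_{\mathcal H}^2$. From the dissipativity computation in the proof of Proposition~\ref{prop:operator}, the backward flow \eqref{adjoint:abstract} gives $\frac{d}{dt}E(t)=\frac d\rho\|\delta_t(t)\|_{L^2(\Gamma_1)}^2\ge0$, hence $E$ is nondecreasing, $E(0)\le E(t)\le E(T)=\tfrac12\|(\phi_0,\delta_0,\phi_1,\delta_1)\|_{\mathcal H}^2$, and $E(T)-E(0)=\frac d\rho\int_0^T\|\delta_t\|_{L^2(\Gamma_1)}^2\,dt$; applying the same identity to $\mathbf Z':=-\mathcal A^{\ast}\mathbf Z$, whose fourth component is $\delta_{tt}$, produces $\frac d\rho\int_0^T\|\delta_{tt}\|_{L^2(\Gamma_1)}^2\,dt$, which is where the $\delta_{tt}$-term of \eqref{ineq:obs_abc} comes from. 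Since $TE(0)\le\int_0^T E(t)\,dt$ and $E(T)=E(0)+\frac d\rho\int_0^T\|\delta_t\|^2$, it is enough to bound $\int_0^T E(t)\,dt$ by $C\int_0^T(\|\delta_t\|^2+\|\delta_{tt}\|^2)_{L^2(\Gamma_1)}\,dt$ up to absorbable terms.

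\emph{Multiplier identities and absorption.} In $\phi_{tt}=c^2\triangle\phi$ use the multipliers $(x-x_0)\cdot\nabla\phi$ and $\phi$ and integrate over $(0,T)\times\Omega$; integration by parts equates $\int_0^T\!\!\int_\Omega(c^{-2}|\phi_t|^2+|\nabla\phi|^2)$ to boundary-in-time terms plus $\int_0^T\!\!\int_\Gamma$ of an explicit quadratic form in $\phi_t,\partial_{\mathbf n}\phi,\nabla_\Gamma\phi,\phi$ weighted by $(x-x_0)\cdot\mathbf n$. On $\Gamma_0$ insert $\partial_{\mathbf n}\phi=0$ and use \eqref{G1} ($(x-x_0)\cdot\mathbf n\le0$): the $|\phi_t|^2$-part has the favourable sign; the tangential part is postponed. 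On $\Gamma_1$ insert $\partial_{\mathbf n}\phi=\delta_t$ and the acoustic law $\rho\phi_t=-m\delta_{tt}+\operatorname{div}_\Gamma(\sigma\nabla_\Gamma\delta)+d\delta_t-k\delta$; since $(x-x_0)\cdot\mathbf n\ge c>0$ there, the term $-(x-x_0)\cdot\mathbf n\,|\nabla_\Gamma\phi|^2$ has the good sign and, after a Young estimate of the cross term, absorbs the tangential contributions, while the trace inequality $\|\phi\|_{L^2(\Gamma_1)}\le C\|\nabla\phi\|_{L^2(\Omega)}$ (Poincar\'e on $\dot H^1(\Omega)$) allows absorbing a small multiple of $\int_0^T\!\!\int_\Omega|\nabla\phi|^2$ into the left-hand side. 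To recover the membrane part $\int_0^T(\tfrac\sigma\rho\|\nabla_\Gamma\delta\|^2+\tfrac k\rho\|\delta\|^2)$ of $\int_0^T E$, multiply the $\delta$-equation in \eqref{adjoint} by $\delta$ and integrate over $(0,T)\times\Gamma_1$ (boundaryless, as $\overline{\Gamma_0}\cap\overline{\Gamma_1}=\emptyset$): this bounds it by $C\int_0^T\|\delta_t\|^2$ plus the coupling $\rho\int_0^T\!\!\int_{\Gamma_1}\phi_t\delta$, which, integrated by parts in time, becomes $\rho[\int_{\Gamma_1}\phi\delta]_0^T-\rho\int_0^T\!\!\int_{\Gamma_1}\phi\delta_t$ and is treated exactly as the other $\Gamma_1$-terms; multiplying by $\delta_t$ merely reproduces the $\delta$-energy identity. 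Choosing the Young parameters small, estimating the boundary-in-time terms by $C(E(0)+E(T))$, and taking $T$ large enough so that the residual multiples of $\int_0^T E$ are absorbed, one reaches
\[
\|(\phi_0,\delta_0,\phi_1,\delta_1)\|_{\mathcal H}^2\le C_T\int_0^T(\|\delta_t\|^2+\|\delta_{tt}\|^2)_{L^2(\Gamma_1)}\,dt+C_T\int_0^T\!\!\int_{\Gamma_0}|(x-x_0)\cdot\mathbf n|\,|\nabla_\Gamma\phi|^2\,d\Gamma\,dt+C_T\,\mathcal R,
\]
where $\mathcal R$ gathers genuinely lower-order quantities, e.g. $\int_0^T(\|\phi\|_{L^2(\Omega)}^2+\|\delta\|_{L^2(\Gamma_1)}^2)\,dt$.

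\emph{Removal of the remainder.} Argue by contradiction: a normalized sequence $\mathbf Z^{(n)}$ of strong solutions with $\int_0^T(\|\delta_t^{(n)}\|^2+\|\delta_{tt}^{(n)}\|^2)_{L^2(\Gamma_1)}\,dt\to0$ has a weak limit $\mathbf Z$ solving \eqref{adjoint} with $\delta_t\equiv\delta_{tt}\equiv0$ on $(0,T)\times\Gamma_1$; then $\partial_{\mathbf n}\phi=0$ on all of $\Gamma$, the acoustic law makes $\phi_t$ $t$-independent on $\Gamma_1$, and expanding $\phi$ in Neumann eigenfunctions together with unique continuation (each nonzero mode has zero Cauchy data on $\Gamma_1$) forces $\phi(t,x)=a+bt$, hence $\phi\equiv0$ since $\phi\in\dot H^1(\Omega)$; inserting this into the $\delta$-equation and integrating over the closed surface $\Gamma_1$ gives $\nabla_\Gamma\delta\equiv0$ and $k\delta\equiv0$, and the Hooke constraint \eqref{3} (equivalently \eqref{Hooke}) with $\phi_t=0$ kills the remaining constant, so $\mathbf Z\equiv0$ --- contradicting normalization, because the lower-order quantities in $\mathcal R$ are compact with respect to the energy norm and the $\Gamma_0$-term does not survive once the previous estimate is invoked along the subsequence. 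This establishes \eqref{ineq:obs_abc}.

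\emph{Main obstacle.} The hard point is the uncontrolled boundary $\Gamma_0$: only $\partial_{\mathbf n}\phi=0$ is available there, so the multiplier method leaves the term $\int_0^T\!\!\int_{\Gamma_0}|(x-x_0)\cdot\mathbf n|\,|\nabla_\Gamma\phi|^2$, which is \emph{not} of lower order, and disposing of it must use the geometric hypothesis \eqref{G1} in an essential way, in concert with the sign of the remaining $\Gamma_0$-terms and the unique-continuation step. A secondary difficulty, specific to $\sigma\neq0$ and absent in the case $\sigma=0$ of \cite{jiao2025mixing}, is that the acoustic law couples $\phi_t|_{\Gamma_1}$ with $\operatorname{div}_\Gamma(\sigma\nabla_\Gamma\delta)$ and only the combination $\rho\phi_t-\operatorname{div}_\Gamma(\sigma\nabla_\Gamma\delta)$ is directly observed, so the wave and membrane multiplier identities must be combined so that neither of these boundary traces appears in isolation.
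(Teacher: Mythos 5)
Your outline contains two genuine gaps, and they are exactly the two points you flag at the end without resolving. First, with the radial multiplier $(x-x_0)\cdot\nabla\phi$ the uncontrolled boundary leaves the term $\int_0^T\!\!\int_{\Gamma_0}|(x-x_0)\cdot\mathbf n|\,|\nabla_\Gamma\phi|^2\,\mathrm{d}\Gamma\,\mathrm{d}t$, which you yourself note is \emph{not} of lower order; precisely for that reason it cannot be eliminated by the compactness--uniqueness step, which only removes terms compact with respect to the energy norm. Your closing claim that ``the $\Gamma_0$-term does not survive once the previous estimate is invoked along the subsequence'' is circular: the estimate you would invoke is the one you are trying to prove. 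The paper never produces this term, because instead of $x-x_0$ it uses the vector field $h=\nabla H\in[C^2(\overline\Omega)]^3$ supplied by Theorem A.4.1 of \cite{LTZ} under the geometric condition \eqref{G1}, with $h\cdot\mathbf n=0$ on $\Gamma_0$ and uniformly convex $H$; the $\Gamma_0$ boundary contribution then vanishes identically, and the price paid is that the tangential trace $\int_0^T\!\!\int_{\Gamma_1}|\nabla_\Gamma\phi|^2$ no longer comes with a sign and must be controlled by the sharp (microlocal) trace regularity estimate \eqref{ob7} (Lemma 7.2 of \cite{LT}) rather than by a pointwise sign argument.

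Second, the trace $\int_0^T\!\!\int_{\Gamma_1}|\phi_t|^2$ cannot be handled the way you indicate. Substituting the acoustic law $\rho\phi_t=-m\delta_{tt}+\mathrm{div}_\Gamma(\sigma\nabla_\Gamma\delta)+d\delta_t-k\delta$ produces $\|\mathrm{div}_\Gamma(\sigma\nabla_\Gamma\delta)\|^2_{L^2(\Gamma_1)}$, which is \emph{above} the energy level (the energy only controls $\delta$ in $H^1(\Gamma_1)$), is not among the observed quantities $\delta_t,\delta_{tt}$, cannot be absorbed by your favourable $\Gamma_1$ term (that term involves $\nabla_\Gamma\phi$, not $\delta$), and cannot be placed in the remainder $\mathcal R$. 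Making this substitution rigorous is the core of the paper's Step 3: one localizes in time with the cutoff $\zeta$, re-uses the boundary equation to estimate $\int_0^T\!\!\int_{\Gamma_1}|\zeta\,\mathrm{div}_\Gamma(\sigma\nabla_\Gamma\delta)|^2$ through the integrations by parts leading to \eqref{ob13}--\eqref{ob15}, invokes \eqref{ob7} again, and closes the loop with the smallness conditions defining $N_1,N_2$ in \eqref{ob16}; this is also where the $\delta_{tt}$ observation genuinely enters (your derivation of the $\delta_{tt}$ term from the energy identity applied to $-\mathcal A^\ast\mathbf Z$ is not how it is used). Your proposal names this as a ``secondary difficulty'' but supplies no argument, so the passage from the multiplier identities to \eqref{ineq:obs_abc} is not established. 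The energy reductions, the final large-$T$ absorption, and the contradiction/unique-continuation scheme are in the spirit of the paper's Steps 1, 2 and 4, but without the special vector field and without the Step-3 estimate the proof does not go through.
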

\begin{proof}
The subsequent proof involves four steps.

\textbf{Step 1}. 
Let us introduce the following functional
\[
\mathcal{E}(s) := \frac{1}{2}\|( \phi(s), \delta(s), \phi_t(s), \delta_t(s))\|^2_{\mathcal{H}}.
\]
Then we have
\[
\mathcal{E}(T)=\mathcal{E}(s)+\frac{d}{\rho}\int_{s}^{T}\int_{\Gamma_1}\delta_{t}^2d\Gamma \mathrm{d}t
\]
and $\mathcal{E}(T)\geqslant \mathcal{E}(s)$ for every $s\in[0, T]$.
Moreover, from Theorem A.4.1 in~\cite{LTZ} the geometric condition~\eqref{G1} implies that there exists a vector field $h(x)$ and a scalar function $H(x)$
\[
	h(x):=\nabla H(x) \in[C^2(\overline{\Omega})]^3
\]
such that $\nabla H\cdot\mathbf{n}=0$ on $\Gamma_0$ and the Hessian matrix of $H$ evaluated on $\Gamma_0$ is positive definite, that is, $\nabla^2 H(x) \geqslant h_0 I$ for some constant $h_0 > 0$.

Multiplying the first equation of~\eqref{adjoint} by $(h\cdot\nabla) \phi$ and integrating by parts, we have
\begin{equation}
\begin{split}\label{ob2}
	0&=\int_{\epsilon_0}^{T-\epsilon_0}\int_{\Omega}(h\cdot\nabla) \phi ( \phi_{tt}  -c^2\triangle \phi)\mathrm{d}x\mathrm{d}t \\
	&=\left[\int_{\Omega}(h\cdot\nabla) \phi  \phi_{t} dx\right]\bigg|_{\epsilon_0}^{T-\epsilon_0}-\int_{\epsilon_0}^{T-\epsilon_0}\int_{\Omega}\left[\nabla\cdot(\frac{h}{2} \phi_{t}^2)-\frac{\nabla\cdot h}{2} \phi_{t}^2)\right]\mathrm{d}x\mathrm{d}t\\
	&\qquad-c^2\int_{\epsilon_0}^{T-\epsilon_0}\int_{\Gamma}(h\cdot\nabla) \phi{\partial_{ \mathbf{n}}\phi}\mathrm{d}\Gamma\mathrm{ d}t+c^2\int_{\epsilon_0}^{T-\epsilon_0}\int_{\Omega}\nabla(h\cdot\nabla \phi)\cdot \nabla\phi \mathrm{d}x\mathrm{d}t
\end{split}
\end{equation}
for some $\epsilon_0\in(0, \frac{T}{2})$. Since we have
\begin{eqnarray*}
\begin{split}
	\nabla(h\cdot\nabla \phi)\cdot \nabla\phi &= \sum_{i, j}\frac{\partial h_j}{\partial x_i}\frac{\partial\phi}{\partial x_i}\frac{\partial\phi}{\partial x_j} + \sum_{i, j}h_j\frac{\partial}{\partial x_j}\left(\frac{\partial\phi}{\partial x_i}\right)^2\\
	&= \sum_{i, j}\frac{\partial h_j}{\partial x_i}\frac{\partial\phi}{\partial x_i}\frac{\partial\phi}{\partial x_j} + \nabla\cdot\left(\frac{h}{2}|\nabla\phi|^2\right)-\frac{\nabla\cdot h}{2}|\nabla\phi|^2,
\end{split}
\end{eqnarray*}
then using the boundary condition $\partial_{ \mathbf{n}}\phi = \delta_t$ on $\Gamma_1$ and $\partial_{ \mathbf{n}}\phi = 0$ on $\Gamma_0$, it deduces from~\eqref{ob2} that
\begin{equation}\label{ob2_1}
\begin{split}
	0&=\int_{\epsilon_0}^{T-\epsilon_0}\int_{\Omega}\frac{\nabla\cdot h}{2}[\phi_t^2-c^2|\nabla\phi|^2] \mathrm{d}x\mathrm{d}t-\int_{\epsilon_0}^{T-\epsilon_0}\int_{\Gamma}\left\{\frac{h\cdot\mathbf{n}}{2}[\phi_t^2-c^2|\nabla\phi|^2]\right\}\mathrm{d}\Gamma \mathrm{d}t\\
	&\qquad -c^2\int_{\epsilon_0}^{T-\epsilon_0}\int_{\Gamma_1}(h\cdot\nabla) \phi\delta_t \mathrm{d}\Gamma_1 \mathrm{d}t+c^2\int_{\epsilon_0}^{T-\epsilon_0}\int_{\Omega}\sum_{i, j}\frac{\partial h_j}{\partial x_i}\frac{\partial\phi}{\partial x_i}\frac{\partial\phi}{\partial x_j} \mathrm{d}x\mathrm{d}t\\
	&\qquad+\left[\int_{\Omega}(h\cdot\nabla) \phi \phi_t \mathrm{d}x\right]\bigg|_{\epsilon_0}^{T-\epsilon_0}.
\end{split}
\end{equation}
By the positive definite property of the Hessian matrix $\nabla^2 H(x)$, we get 
\begin{equation}\label{ob2_2}
\begin{split}
	c^2 h_{0}\int_{\epsilon_0}^{T-\epsilon_0}\int_{\Omega}|\nabla\phi|^2  \mathrm{d}x\mathrm{d}t \leqslant c^2\int_{\epsilon_0}^{T-\epsilon_0}\int_{\Omega}\sum_{i, j}\frac{\partial h_j}{\partial x_i}\frac{\partial\phi}{\partial x_i}\frac{\partial\phi}{\partial x_j} \mathrm{d}x\mathrm{d}t.
\end{split}
\end{equation}
Combining~\eqref{ob2_1} and~\eqref{ob2_2} gives
\begin{equation*}
\begin{aligned}
	&c^2 h_{0}\int_{\epsilon_0}^{T-\epsilon_0}\int_{\Omega}|\nabla\phi|^2  \mathrm{d}x\mathrm{d}t\\
	&\leqslant \int_{\epsilon_0}^{T-\epsilon_0}\int_{\Gamma_1}\left\{\frac{h\cdot\mathbf{n}}{2}[\phi_t^2-c^2|\nabla\phi|^2]\right\}\mathrm{d}\Gamma \mathrm{d}t - \int_{\epsilon_0}^{T-\epsilon_0}\int_{\Omega}\frac{\nabla\cdot h}{2}[\phi_t^2-c^2|\nabla\phi|^2]\mathrm{d}x\mathrm{d}t\\
	&\qquad +c^2\int_{\epsilon_0}^{T-\epsilon_0}\int_{\Gamma_1}(h\cdot\nabla) \phi\delta_t \mathrm{d}\Gamma \mathrm{d}t-\left[\int_{\Omega}(h\cdot\nabla) \phi \phi_t \mathrm{d}x\right]\bigg|_{\epsilon_0}^{T-\epsilon_0}.
\end{aligned}
\end{equation*}
Due to $|\nabla\phi|^2=|\partial_{ \mathbf{n}}\phi|^2 + |\nabla_{\Gamma}\phi|^2=|\delta_t|^2 + |\nabla_{\Gamma}\phi|^2$ on $\Gamma_1$, we further obtain the following estimate
\begin{equation}
\begin{aligned} \label{ob3}
	c^2 h_{0}\int_{\epsilon_0}^{T-\epsilon_0}\int_{\Omega}|\nabla\phi|^2  \mathrm{d}x\mathrm{d}t\leqslant& C(h)\left\{\int_{\epsilon_0}^{T-\epsilon_0}\int_{\Gamma_1}[\phi_t^2 + |\nabla_{\Gamma}\phi|^2]\mathrm{d}\Gamma \mathrm{d}t+\int_{0}^{T}\int_{\Gamma_1}\delta_t^2\mathrm{d}\Gamma \mathrm{d}t \right\}\\
	&+ \left|\int_{\epsilon_0}^{T-\epsilon_0}\int_{\Omega}\frac{\nabla\cdot h}{2}[\phi_t^2-c^2|\nabla\phi|^2] \mathrm{d}x\mathrm{d}t\right| +C(h)\mathcal{E}(T)
\end{aligned}
\end{equation}
Here, $\nabla_{\Gamma}$ is the tangential derivative on the boundary.

Multiplying the first equation of~\eqref{adjoint} by $\phi(\nabla\cdot h)$ and integrating by parts, we have
\begin{eqnarray*}
\begin{aligned}
	0&=\int_{\epsilon_0}^{T-\epsilon_0}\int_{\Omega}\phi(\nabla\cdot h) (\phi_{tt}-c^2\triangle \phi)\mathrm{d}x\mathrm{d}t \\
	&=\left[\int_{D}\phi(\nabla\cdot h) \phi_{t}dx\right]\bigg|_{\epsilon_0}^{T-\epsilon_0}-\int_{\epsilon_0}^{T-\epsilon_0}\int_{\Omega}\left[(\nabla\cdot h)\phi_t^2-c^2 \nabla( \phi \nabla\cdot h)\cdot \nabla\phi\right]\mathrm{d}x\mathrm{d}t\\
	&\qquad-c^2\int_{\epsilon_0}^{T-\epsilon_0}\int_{\Gamma} \phi(\nabla\cdot h){\partial_{ \mathbf{n}}\phi} \mathrm{d}\Gamma \mathrm{d}t\\
	&= \left[\int_{\Omega}\phi(\nabla\cdot h) \phi_{t}\mathrm{d}x \right]\bigg|_{\epsilon_0}^{T-\epsilon_0}-\int_{\epsilon_0}^{T-\epsilon_0}\int_{\Omega}(\nabla\cdot h)\left[\phi_t^2-c^2 |\nabla\phi|^2\right] \mathrm{d}x\mathrm{d}t\\
	&\qquad+c^2 \int_{\epsilon_0}^{T-\epsilon_0}\int_{\Omega}  \phi \nabla\phi \cdot \nabla(\nabla\cdot h)\mathrm{d}x\mathrm{d}t - c^2\int_{\epsilon_0}^{T-\epsilon_0}\int_{\Gamma_1} \phi(\nabla\cdot h)\delta_t \mathrm{d}\Gamma \mathrm{d}t,
\end{aligned}
\end{eqnarray*}
which implies
\begin{equation}
\begin{aligned} \label{ob4}
	&\int_{\epsilon_0}^{T-\epsilon_0}\int_{\Omega}(\nabla\cdot h)[\phi_t^2-c^2 |\nabla\phi|^2]\mathrm{d}x\mathrm{d}t\\
	&=\langle\phi(\nabla\cdot h), \phi_{t}\rangle\bigg|_{\epsilon_0}^{T-\epsilon_0}+c^2 \int_{\epsilon_0}^{T-\epsilon_0}\int_{\Omega}  \phi \nabla\phi \cdot \nabla(\nabla\cdot h)\mathrm{d}x\mathrm{d}t - c^2\int_{\epsilon_0}^{T-\epsilon_0}\int_{\Gamma_1} \phi(\nabla\cdot h)\delta_t \mathrm{d}\Gamma \mathrm{d}t.	
\end{aligned}
\end{equation}
Here, $\langle\cdot, \cdot\rangle$ means the the product in $H^{-\eta}(\Omega)\times H^{\eta}(\Omega)$ for $\eta>0$. 

Now, we introduce the standard denotation for the terms which are below the level of energy, that is,
\[
	\textrm{LOT}(\Phi, \Psi):=\|(\Phi, \Psi)\|^2_{C([0, T]; \mathcal{M})}.
\]
Here, $\Phi=\{\phi, \delta\}$, $\Psi=\{\phi_t, \delta_t\}$ and $\mathcal{M}=H^{1-\eta}(\Omega)\times H^{-\eta}(\Gamma_1)\times H^{-\eta}(\Omega)\times H^{-\eta}(\Gamma_1)$. Then by~\eqref{ob4} and the Young's inequality, we get
\begin{equation} \label{ob5}
\begin{split} 
	&\int_{\epsilon_0}^{T-\epsilon_0}\int_{\Omega}(\nabla\cdot h)[\phi_t^2-c^2 |\nabla\phi|^2]\mathrm{d}x \mathrm{d}t\\
	& \leq \tau_1\int_{\epsilon_0}^{T-\epsilon_0}\int_{\Omega}|\nabla\phi|^2 \mathrm{d}x \mathrm{d}t+C(\tau_1) \int_{0}^{T}\int_{\Gamma_1}\delta_t^2\mathrm{d}\Gamma \mathrm{d}t +\textrm{LOT}(\Phi, \Psi)
\end{split}
\end{equation}
for some small constant $0<\tau_1\ll1$.
From~\eqref{ob3} and~\eqref{ob5} it follows that
\begin{equation} 
\begin{split}\label{ob6}
	&\int_{\epsilon_0}^{T-\epsilon_0}\int_{\Omega}[c^{-2}|\phi_t|^2+ |\nabla\phi|^2]\mathrm{d}x \mathrm{d}t \\
	&\leq C(\tau_1, h) \left\{\int_{0}^{T}\int_{\Gamma_1}\delta_t^2\mathrm{d}\Gamma \mathrm{d}t + \int_{\epsilon_0}^{T-\epsilon_0}\int_{\Gamma_1}[\phi_t^2 + |\nabla_{\Gamma}\phi|^2]\mathrm{d}\Gamma \mathrm{d}t \right\} +C(h)\mathcal{E}(T)+\textrm{LOT}(\Phi, \Psi).
\end{split}
\end{equation}

\textbf{Step 2}. Let $\zeta(\cdot)\in C_{0}^{\infty}(\mathbb{R})$ be cut-off function given by
\begin{equation*} 
\zeta(t)=\left
    \{
        \begin{array}{ll}
            1,\quad &t\in[\epsilon_0, T-\epsilon_0],\\
            \textrm{a $C^{\infty}$ function with range in $(0, 1)$}, \quad &t\in(0, \epsilon_0)\cup (T-\epsilon_0, T),\\
            0, \quad &t\in(-\infty, 0]\cup [T, \infty).    
        \end{array} 
\right.
\end{equation*}
Using Lemma 7.2 in~\cite{LT}, we have
\begin{equation} \label{ob7}
\begin{aligned}
	&\int_{\epsilon_0}^{T-\epsilon_0}\int_{\Gamma_1} |\nabla_{\Gamma}\phi|^2 \mathrm{d}\Gamma \mathrm{d}t\\
	&\leq \int_{0}^{T}\int_{\Gamma_1} |\nabla_{\Gamma}(\zeta\phi)|^2 \mathrm{d}\Gamma \mathrm{d}t\\
	&\leq C(T, \epsilon_0)\left\{\int_{0}^{T}\int_{\Gamma_1} |{\partial_ \mathbf{n}\phi}|^2 \mathrm{d}\Gamma \mathrm{d}t +\int_{0}^{T}\int_{\Gamma_1}| \frac{\partial}{\partial t}(\zeta \phi)|^2 \mathrm{d}\Gamma \mathrm{d}t\right\}+\textrm{LOT}(\Phi, \Psi)\\
	&\leq C(T, \epsilon_0)\left\{\int_{0}^{T}\int_{\Gamma_1} |\delta_t|^2 \mathrm{d}\Gamma \mathrm{d}t +\int_{0}^{T}\int_{\Gamma_1} \zeta^2|\phi_t|^2 \mathrm{d}\Gamma \mathrm{d}t\right\}+\textrm{LOT}(\Phi, \Psi).
\end{aligned}
\end{equation}

By~\eqref{ob6} and~\eqref{ob7} we have
\begin{equation}\label{ob8}
\begin{split}
\int_{\epsilon_0}^{T-\epsilon_0}\mathcal{E}(s)\mathrm{d}s \leqslant &C(T, \epsilon_0, \tau_1, h, m, \rho)\left\{\int_{0}^{T}\int_{\Gamma_1} |\delta_t|^2 \mathrm{d}\Gamma \mathrm{d}t +\int_{0}^{T}\int_{\Gamma_1}\zeta^2|\phi_t|^2 \mathrm{d}\Gamma \mathrm{d}t\right\}\\
&+ \int_{0}^{T}\int_{\Gamma_1}\left[\frac{k}{\rho} |\delta|^2 + \frac{\sigma}{\rho}|\nabla_{\Gamma}\delta|^2\right]\mathrm{d}\Gamma \mathrm{d}t+C(h)\mathcal{E}(T) +\textrm{LOT}(\Phi, \Psi).
\end{split}
\end{equation}

Multiplying the second equation of~\eqref{adjoint} by $\delta$ and integrating by parts, we obtain
\begin{equation}\label{ob9}
\begin{split}
0= &\int_{0}^{T}\int_{\Gamma_1}\delta\left[\rho\phi_t + m \delta_{tt} - \mathrm{div}_{\Gamma}\left(\sigma\nabla_{\Gamma} \delta\right) - d \delta_t + k \delta\right]\mathrm{d}\Gamma \mathrm{d}t\\
=&\left[ \int_{\Gamma_1}\rho\delta\phi\mathrm{d}\Gamma\right]_{t=0}^{t=T} -\int_{0}^{T}\int_{\Gamma_1}\rho\delta_t\phi \mathrm{d}\Gamma \mathrm{d}t +\left[ \int_{\Gamma_1}m\delta\delta_t\mathrm{d}\Gamma\right]_{t=0}^{t=T} -\int_{0}^{T}\int_{\Gamma_1}m\delta^2_t \mathrm{d}\Gamma \mathrm{d}t\\
&-\left[ \int_{\Gamma_1}\frac{d}{2}\delta^2\mathrm{d}\Gamma\right]_{t=0}^{t=T} +\int_{0}^{T}\int_{\Gamma_1}\left[k \delta^2 + \sigma|\nabla_{\Gamma} \delta|^2\right]\mathrm{d}\Gamma \mathrm{d}t.
\end{split}
\end{equation}
Using the Young's inequality and the Poincar\'e inequality, we get
\[
\left|\int_{0}^{T}\int_{\Gamma_1}\rho\delta_t\phi \mathrm{d}\Gamma \mathrm{d}t\right|\leqslant\rho\tau_2 T\mathcal{E}(T) + C(\rho, \tau_2)\int_{0}^{T}\int_{\Gamma_1}|\delta_t|^2 \mathrm{d}\Gamma \mathrm{d}t
\]
for some small constant $0<\tau_2\ll1$. We also note that
\[
\left| \left[ \int_{\Gamma_1}\rho\delta\phi\mathrm{d}\Gamma\right]_{t=0}^{t=T} + \left[ \int_{\Gamma_1}m\delta\delta_t\mathrm{d}\Gamma\right]_{t=0}^{t=T} -\left[ \int_{\Gamma_1}\frac{d}{2}\delta^2\mathrm{d}\Gamma\right]_{t=0}^{t=T}\right|\leqslant C(\rho, m, d)\mathcal{E}(T).
\]
Then by~\eqref{ob9} we have
\begin{equation}\label{ob9_1}
\int_{0}^{T}\int_{\Gamma_1}\left[k \delta^2 + \sigma|\nabla_{\Gamma} \delta|^2\right]\mathrm{d}\Gamma \mathrm{d}t\leqslant \left(C(\rho, m, d)+\rho\tau_2 T\right)\mathcal{E}(T) + C(\rho, \tau_2)\int_{0}^{T}\int_{\Gamma_1}|\delta_t|^2 \mathrm{d}\Gamma \mathrm{d}t.
\end{equation}
Combining this estimate with~\eqref{ob8} derives
\begin{equation}\label{ob10}
\begin{split}
\int_{\epsilon_0}^{T-\epsilon_0}\mathcal{E}(s)\mathrm{d}s \leqslant &C(T, \epsilon_0, \tau_1, \tau_2, h, m, \rho)\left\{\int_{0}^{T}\int_{\Gamma_1} |\delta_t|^2 \mathrm{d}\Gamma \mathrm{d}t +\int_{0}^{T}\int_{\Gamma_1}\zeta^2|\phi_t|^2 \mathrm{d}\Gamma \mathrm{d}t\right\}\\
&+\left(C(h, \rho, m, d)+\rho\tau_2 T\right)\mathcal{E}(T) +\textrm{LOT}(\Phi, \Psi).
\end{split}
\end{equation}

\textbf{Step 3}. In this step, we are going to estimate $\int_{0}^{T}\int_{\Gamma_1}\zeta^2|\phi_t|^2 \mathrm{d}\Gamma \mathrm{d}t$. 
Note that the second equation of~\eqref{adjoint} yields
 \begin{equation}\label{ob11_1}
\begin{split}
&\rho^2\int_{0}^{T}\int_{\Gamma_1}\zeta^2|\phi_t|^2 \mathrm{d}\Gamma \mathrm{d}t \\
&= \int_{0}^{T}\int_{\Gamma_1}\left[m\zeta \delta_{tt} - \zeta\mathrm{div}_{\Gamma}\left(\sigma\nabla_{\Gamma} \delta\right) - d \zeta\delta_t + k\zeta \delta\right]^2\mathrm{d}\Gamma \mathrm{d}t \\
&\leqslant C(m, d)\int_{0}^{T}\int_{\Gamma_1}\left[|\zeta \delta_{tt}|^2 +|\zeta\delta_t|^2 \right]\mathrm{d}\Gamma \mathrm{d}t +\int_{0}^{T}\int_{\Gamma_1}\left[ {|\zeta\mathrm{div}_{\Gamma}\left(\sigma\nabla_{\Gamma} \delta\right)|^2} + {k^2|\zeta \delta|^2} \right]\mathrm{d}\Gamma \mathrm{d}t.
\end{split}
\end{equation}
Moreover, we have
\begin{equation*}
\begin{split}
0= &\int_{0}^{T}\int_{\Gamma_1}\zeta^2\delta\left[\rho\phi_t + m \delta_{tt} - \mathrm{div}_{\Gamma}\left(\sigma\nabla_{\Gamma} \delta\right) - d \delta_t + k \delta\right]\mathrm{d}\Gamma \mathrm{d}t\\
=& -\int_{0}^{T}\int_{\Gamma_1}\rho(\zeta^2\delta)_t\phi \mathrm{d}\Gamma \mathrm{d}t  + \int_{0}^{T}\int_{\Gamma_1}m\zeta^2\delta\delta_{tt} \mathrm{d}\Gamma \mathrm{d}t - \int_{0}^{T}\int_{\Gamma_1}d\zeta^2\delta_t\delta \mathrm{d}\Gamma \mathrm{d}t\\
&+\int_{0}^{T}\int_{\Gamma_1}\left[k |\zeta\delta|^2 + \sigma|\nabla_{\Gamma} (\zeta\delta)|^2\right]\mathrm{d}\Gamma \mathrm{d}t,
\end{split}
\end{equation*}
which implies
\begin{equation*}
\begin{split}
&\int_{0}^{T}\int_{\Gamma_1}k^2 |\zeta\delta|^2 \mathrm{d}\Gamma \mathrm{d}t\\
&\leqslant \int_{0}^{T}\int_{\Gamma_1}k\rho(\zeta^2\delta)_t\phi \mathrm{d}\Gamma \mathrm{d}t  - \int_{0}^{T}\int_{\Gamma_1}km\zeta^2\delta\delta_{tt} \mathrm{d}\Gamma \mathrm{d}t + \int_{0}^{T}\int_{\Gamma_1}d\zeta^2\delta_t\delta \mathrm{d}\Gamma \mathrm{d}t\\
&\leqslant \int_{0}^{T}\int_{\Gamma_1}k\rho(2\zeta\zeta_t\delta\phi) \mathrm{d}\Gamma \mathrm{d}t +\frac{1}{2}\int_{0}^{T}\int_{\Gamma_1}k\rho|\zeta\phi|^2  \mathrm{d}\Gamma \mathrm{d}t  +\frac{5}{8} \int_{0}^{T}\int_{\Gamma_1}k^2|\zeta\delta|^2 \mathrm{d}\Gamma \mathrm{d}t \\
&\quad +\frac{1}{2}\int_{0}^{T}\int_{\Gamma_1}\left[\left(k\rho + 2\frac{d^2}{k^2}\right) |\zeta\delta_t|^2 + m^2|\zeta\delta_{tt}|^2\right]\mathrm{d}\Gamma \mathrm{d}t\\
&\leqslant \int_{0}^{T}\int_{\Gamma_1}\left(8\rho^2|\zeta_t\phi|^2 +\frac{1}{2} k\rho|\zeta\phi|^2  \right)\mathrm{d}\Gamma \mathrm{d}t  +\frac{3}{4} \int_{0}^{T}\int_{\Gamma_1}k^2|\zeta\delta|^2 \mathrm{d}\Gamma \mathrm{d}t
\\
&\quad +\frac{1}{2}\int_{0}^{T}\int_{\Gamma_1}\left[\left(k\rho + 2\frac{d^2}{k^2}\right) |\zeta\delta_t|^2 + m^2|\zeta\delta_{tt}|^2\right]\mathrm{d}\Gamma \mathrm{d}t.
\end{split}
\end{equation*}
This can further yields
\begin{equation} \label{ob11_2}
\begin{split}
&\int_{0}^{T}\int_{\Gamma_1}k^2 |\zeta\delta|^2 \mathrm{d}\Gamma \mathrm{d}t\\
&\leqslant \int_{0}^{T}\int_{\Gamma_1}\left(32\rho^2|\zeta_t\phi|^2 + 2k\rho|\zeta\phi|^2\right) \mathrm{d}\Gamma \mathrm{d}t + 2\int_{0}^{T}\int_{\Gamma_1}\left[\left(k\rho + 2\frac{d^2}{k^2}\right) |\zeta\delta_t|^2 + m^2|\zeta\delta_{tt}|^2\right]\mathrm{d}\Gamma \mathrm{d}t .
\end{split}
\end{equation}
From~\eqref{ob11_1} and~\eqref{ob11_2}, we can deduce
 \begin{equation}\label{ob12}
\begin{split}
&\rho^2\int_{0}^{T}\int_{\Gamma_1}\zeta^2|\phi_t|^2 \mathrm{d}\Gamma \mathrm{d}t \\
&\leqslant C(\rho, m, d, k)\int_{0}^{T}\int_{\Gamma_1}\left[|\zeta \delta_{tt}|^2 +|\delta_t|^2 \right]\mathrm{d}\Gamma \mathrm{d}t +\int_{0}^{T}\int_{\Gamma_1}|\zeta\mathrm{div}_{\Gamma}\left(\sigma\nabla_{\Gamma} \delta\right)|^2 \mathrm{d}\Gamma \mathrm{d}t\\
&\quad + \int_{0}^{T}\int_{\Gamma_1}\left(32\rho^2|\zeta_t\phi|^2 + 2k\rho|\zeta\phi|^2\right) \mathrm{d}\Gamma \mathrm{d}t.
\end{split}
\end{equation}

Using the Young's inequality and integrating by parts, one can derive from the second equation of~\eqref{adjoint} that
 \begin{equation*}
\begin{split}
&\int_{0}^{T}\int_{\Gamma_1}{|\zeta\mathrm{div}_{\Gamma}\left(\sigma\nabla_{\Gamma} {\delta}\right)|^2} \mathrm{d}\Gamma \mathrm{d}t\\
&=\int_{0}^{T}\int_{\Gamma_1}\left(\rho \zeta{\phi}_t+m \zeta{\delta}_{tt}- d \zeta{\delta}_t+ k \zeta{\delta} \right)\zeta\mathrm{div}_{\Gamma}\left(\sigma\nabla_{\Gamma} {\delta}\right) \mathrm{d}\Gamma \mathrm{d}t\\
&=\int_{0}^{T}\int_{\Gamma_1} \rho \zeta^2 {\phi}_t \mathrm{div}_{\Gamma}\left(\sigma\nabla_{\Gamma}{\delta}\right) \mathrm{d}\Gamma \mathrm{d}t -\int_{0}^{T}\int_{\Gamma_1}k\sigma|\zeta\nabla_{\Gamma} {\delta}|^2\mathrm{d}\Gamma \mathrm{d}t\\
&\quad+ \int_{0}^{T}\int_{\Gamma_1}\left(m \zeta{\delta}_{tt}- d \zeta{\delta}_t \right)\zeta \mathrm{div}_{\Gamma}\left(\sigma\nabla_{\Gamma} {\delta}\right) \mathrm{d}\Gamma \mathrm{d}t \\
&\leqslant \int_{0}^{T}\int_{\Gamma_1} \rho \zeta^2 {\phi}_t \mathrm{div}_{\Gamma}\left(\sigma\nabla_{\Gamma} {\delta}\right) \mathrm{d}\Gamma \mathrm{d}t +\tau_3\int_{0}^{T}\int_{\Gamma_1}|\zeta\mathrm{div}_{\Gamma}\left(\sigma\nabla_{\Gamma} {\delta}\right)|^2 \mathrm{d}\Gamma \mathrm{d}t\\
&\quad+ C(m, d, \tau_3)\int_{0}^{T}\int_{\Gamma_1}\left[|\zeta \delta_{tt}|^2 +|\zeta\delta_t|^2 \right]\mathrm{d}\Gamma \mathrm{d}t 
\end{split}
\end{equation*}
from which it follows that
 \begin{equation}\label{ob13}
\begin{split}
&(1-\tau_3)\int_{0}^{T}\int_{\Gamma_1}{|\zeta\mathrm{div}_{\Gamma}\left(\sigma\nabla_{\Gamma} {\delta}\right)|^2} \mathrm{d}\Gamma \mathrm{d}t\\
&\leqslant \int_{0}^{T}\int_{\Gamma_1} {\rho \zeta^2 {\phi}_t \mathrm{div}_{\Gamma}\left(\sigma\nabla_{\Gamma} {\delta}\right)} \mathrm{d}\Gamma \mathrm{d}t+ C(m, d, \tau_3)\int_{0}^{T}\int_{\Gamma_1}\left[|\zeta \delta_{tt}|^2 +|\zeta\delta_t|^2 \right]\mathrm{d}\Gamma \mathrm{d}t. 
\end{split}
\end{equation}
By integrating by parts and the Young's inequality, we get
\begin{equation*}
\begin{split}
&\int_{0}^{T}\int_{\Gamma_1} {\rho \zeta^2 {\phi}_t \mathrm{div}_{\Gamma}\left(\sigma\nabla_{\Gamma} {\delta}\right)} \mathrm{d}\Gamma \mathrm{d}t \\
&=-\int_{0}^{T}\int_{\Gamma_1} \rho {\phi}\left[ 2\zeta\zeta_t\mathrm{div}_{\Gamma}\left(\sigma\nabla_{\Gamma} {\delta}\right) + \zeta^2\mathrm{div}_{\Gamma}\left(\sigma\nabla_{\Gamma} {\delta}_t\right)\right] \mathrm{d}\Gamma \mathrm{d}t\\
&=-2\int_{0}^{T}\int_{\Gamma_1} \rho \zeta\zeta_t {\phi}\mathrm{div}_{\Gamma}\left(\sigma\nabla_{\Gamma} {\delta}\right)\mathrm{d}\Gamma \mathrm{d}t + \int_{0}^{T}\int_{\Gamma_1}\rho\sigma\zeta^2\nabla_{\Gamma}\phi\cdot \nabla_{\Gamma} {\delta}_t\mathrm{d}\Gamma \mathrm{d}t\\
&\leqslant \tau_4\int_{0}^{T}\int_{\Gamma_1} |\zeta\mathrm{div}_{\Gamma}\left(\sigma\nabla_{\Gamma} {\delta}\right)|^2\mathrm{d}\Gamma \mathrm{d}t  +C(\rho, \tau_4)\int_{0}^{T}\int_{\Gamma_1} {\phi}^2\mathrm{d}\Gamma \mathrm{d}t\\
&\quad +\tau_5{\int_{0}^{T}\int_{\Gamma_1}\sigma |\nabla_{\Gamma}(\zeta\phi)|^2\mathrm{d}\Gamma \mathrm{d}t} + C(\rho, \tau_5)\int_{0}^{T}\int_{\Gamma_1}\zeta^2\sigma|\nabla_{\Gamma} {\delta}_t|^2\mathrm{d}\Gamma \mathrm{d}t
\end{split}
\end{equation*}
and
\begin{equation*}
\begin{split}
\int_{0}^{T}\int_{\Gamma_1}\zeta^2\sigma|\nabla_{\Gamma} {\delta}_t|^2\mathrm{d}\Gamma \mathrm{d}t =&- \int_{0}^{T}\int_{\Gamma_1}\zeta^2{\delta}_t \mathrm{div}_{\Gamma}\left(\sigma\nabla_{\Gamma} {\delta}_t \right)\mathrm{d}\Gamma \mathrm{d}t\\
\leqslant &\tau_6\int_{0}^{T}\int_{\Gamma_1} |\zeta\mathrm{div}_{\Gamma}\left(\sigma\nabla_{\Gamma} {\delta}\right)|^2\mathrm{d}\Gamma \mathrm{d}t + C(\tau_6)\int_{0}^{T}\int_{\Gamma_1}|\zeta\delta_t|^2\mathrm{d}\Gamma \mathrm{d}t. 
\end{split}
\end{equation*}
Based on these two estimates and~\eqref{ob7}, one can derive
\begin{equation}\label{ob14}
\begin{split}
&\int_{0}^{T}\int_{\Gamma_1} {\rho \zeta^2 {\phi}_t \mathrm{div}_{\Gamma}\left(\sigma\nabla_{\Gamma} {\delta}\right)} \mathrm{d}\Gamma \mathrm{d}t \\
&\leqslant (\tau_4 + \tau_6 C(\rho, \tau_5))\int_{0}^{T}\int_{\Gamma_1} |\zeta\mathrm{div}_{\Gamma}\left(\sigma\nabla_{\Gamma} {\delta}\right)|^2\mathrm{d}\Gamma \mathrm{d}t  \\
&\quad +C(T, \epsilon_0, \sigma)\tau_5\int_{0}^{T}\int_{\Gamma_1} |\delta_t|^2 \mathrm{d}\Gamma \mathrm{d}t+ C(\rho, \tau_5, \tau_6)\int_{0}^{T}\int_{\Gamma_1}|\zeta\delta_t|^2\mathrm{d}\Gamma \mathrm{d}t \\
&\quad +C(T, \epsilon_0, \sigma)\tau_5\int_{0}^{T}\int_{\Gamma_1} \zeta^2|\phi_t|^2 \mathrm{d}\Gamma \mathrm{d}t+C(\rho, \tau_4)\int_{0}^{T}\int_{\Gamma_1} {\phi}^2\mathrm{d}\Gamma \mathrm{d}t + \textrm{LOT}(\Phi, \Psi).
\end{split}
\end{equation}
Substituting~\eqref{ob14} into~\eqref{ob13}, one has the following estimate
 \begin{equation}\label{ob15}
\begin{split}
&(1-\tau_3-\tau_4 - \tau_6 C(\rho, \tau_5))\int_{0}^{T}\int_{\Gamma_1}|\zeta\mathrm{div}_{\Gamma}\left(\sigma\nabla_{\Gamma} {\delta}\right)|^2 \mathrm{d}\Gamma \mathrm{d}t\\
&\leqslant C(T, \epsilon_0, \sigma)\tau_5\int_{0}^{T}\int_{\Gamma_1} \zeta^2|\phi_t|^2 \mathrm{d}\Gamma \mathrm{d}t+ C(\rho, m, d, \tau_3,  \tau_5, \tau_6)\int_{0}^{T}\int_{\Gamma_1}\left[|\zeta \delta_{tt}|^2 +|\zeta\delta_t|^2 \right]\mathrm{d}\Gamma \mathrm{d}t\\
&\quad+C(T, \epsilon_0, \sigma)\tau_5\int_{0}^{T}\int_{\Gamma_1} |\delta_t|^2 \mathrm{d}\Gamma \mathrm{d}t +C(\rho, \tau_4)\int_{0}^{T}\int_{\Gamma_1} {\phi}^2\mathrm{d}\Gamma \mathrm{d}t+\textrm{LOT}(\Phi, \Psi).
\end{split}
\end{equation}
Let $\frac{1}{N_1}: = 1-\tau_3-\tau_4 - \tau_6 C(\rho, \tau_5)$. Choosing $\tau_i$, $i=3, 4, 5, 6$ small enough ensures 
\[
0<\frac{1}{N_2}:=\frac{\rho^2}{N_1} - C(T, \epsilon_0, \sigma)\tau_5<1.
\]
Combining~\eqref{ob12} with~\eqref{ob15} gives
 \begin{equation}\label{ob16}
\begin{split}
&\int_{0}^{T}\int_{\Gamma_1}\zeta^2|\phi_t|^2 \mathrm{d}\Gamma \mathrm{d}t \\
&\leqslant N_2\left[\frac{C(\rho, m, d, k)}{N_1} + C(\rho, m, d, \sigma, \tau_3,  \tau_5, \tau_6)\right]\int_{0}^{T}\int_{\Gamma_1}\left[|\zeta \delta_{tt}|^2 +|\delta_t|^2 \right]\mathrm{d}\Gamma \mathrm{d}t \\
&\quad+\left[ T\left(\frac{N_2}{N_1}\left(32\rho^2+2k\rho\right)\right)+N_2C(\rho, \tau_4)+N_2\right] \textrm{LOT}(\Phi, \Psi).
\end{split}
\end{equation}

\textbf{Step 4}. Now, one can deduce from~\eqref{ob10} and~\eqref{ob16} that
\begin{equation}\label{step4_00}
\begin{split}
\int_{\epsilon_0}^{T-\epsilon_0}\mathcal{E}(s)\mathrm{d}s \leqslant &C(T, \epsilon_0, \tau_i, h, m, \rho, d, k, N_1, N_2)\int_{0}^{T}\int_{\Gamma_1}\left[|\zeta \delta_{tt}|^2 +|\delta_t|^2 \right]\mathrm{d}\Gamma \mathrm{d}t \\
&+\left(C(h, \rho, m, d)+\tau_2 T\right)\mathcal{E}(T) +C(T, \rho, k, \tau_4, N_1, N_2)\textrm{LOT}(\Phi, \Psi).
\end{split}
\end{equation}
Here, $i=3, 4, 5, 6$.
Due to
\begin{equation*}
(T-2\epsilon_0)\mathcal{E}(T) = \int_{\epsilon_0}^{T-\epsilon_0}\left(\mathcal{E}(s) +\frac{d}{\rho}\int_{s}^{T}\int_{\Gamma_1}\delta_{t}^2d\Gamma \mathrm{d}t\right)\mathrm{d}s,
\end{equation*}
then by~\eqref{step4_00} we get
\begin{equation}\label{step4_01}
\begin{split}
&(T-2\epsilon_0 - C(h, \rho, m, d)+\tau_2 T)\mathcal{E}(T) \\
&\leqslant C(T, \epsilon_0, \tau_i, h, m, \rho, d, k, N_1, N_2)\int_{0}^{T}\int_{\Gamma_1}\left[|\zeta \delta_{tt}|^2 +|\delta_t|^2 \right]\mathrm{d}\Gamma \mathrm{d}t \\
&\quad +C(T, \rho, k, \tau_4, N_1, N_2)\textrm{LOT}(\Phi, \Psi).
\end{split}
\end{equation}

Subsequently, we shall show that there exists a constant $C_T>0$ such that the following inequality holds
\begin{equation}\label{step4_1}
	\textrm{LOT}(\Phi, \Psi)\leq C_T \|\delta_{t}\|^2_{L^2([0, T]\times \Gamma_1)}.
\end{equation}
To do so, we use the method of contradiction. 
Suppose that the claim~\eqref{step4_1} is false. Then there exists a sequence of strong solutions $\{\Phi^{(n)}(t), \Psi^{(n)}(t)\}$ to the backward system~\eqref{adjoint} with the terminal data 
$\{\Phi^{(n)}(T), \Psi^{(n)}(T)\}\subset\mathcal{H}$ such that
\begin{equation} \label{false}
\begin{aligned}
	\textrm{LOT}(\Phi^{(n)}, \Psi^{(n)}) =1, \quad \|\delta^{(n)}_{t}\|^2_{L^2([0, T]\times \Gamma_1)}\xrightarrow{n\rightarrow \infty} 0.
\end{aligned}
\end{equation}

From~\eqref{step4_01}, it follows that $\|\{\Phi^{(n)}(T), \Psi^{(n)}(T)\}\|_{\mathcal{H}}$ is bounded. Then there exists a subsequence, still denoted by $\{\Phi^{(n)}(T), \Psi^{(n)}(T)\}$, and $(\hat{\Phi}(T), \hat{\Psi}(T))\in\mathcal{H}$ such that
\begin{eqnarray*}
	\{\Phi^{(n)}(T), \Psi^{(n)}(T)\}\xrightarrow[n\rightarrow\infty]{weakly} \{\hat{\Phi}(T), \hat{\Psi}(T)\},\quad \textrm{in $\mathcal{H}$}.
\end{eqnarray*}
Let $\{\hat{\Phi}(t), \hat{\Psi}(t)\}$ be the strong solutions of~\eqref{adjoint} subject to the terninal data $\{\hat{\Phi}(T), \hat{\Psi}(T)\}$. Due to fact that $\mathcal{E}(T)\geqslant \mathcal{E}(s)$, one can see that $\|\{\Phi^{(n)}(t), \Psi^{(n)}(t)\}\|_{C([0, T]; \mathcal{H})}$ is bounded. Then we have
\begin{equation}\label{step4_2}
	\{\Phi^{(n)}(t), \Psi^{(n)}(t)\}\xrightarrow{\textrm{weak star}} \{\hat{\Phi}(t), \hat{\Psi}(t)\},\quad \textrm{in $L^{\infty}([0, T]; \mathcal{H})$}.
\end{equation}
Let $\mathcal{N}=H^{-\eta}(\Omega)\times H^{-\eta}(\Gamma_1)\times (H^{1}(\Omega))^{\prime}\times H^{-\eta}(\Gamma_1)$. For any $\omega\in H^1$ and $t\in(0, T)$, we have the following equation holds true
\[
 (\phi^{(n)}_{tt}, \omega)_{H^{-1}\times H^1} =\int_{\Omega}\Delta \phi^{(n)} \omega dx = \int_{\Gamma_1}\delta^{(n)}_t\omega d\Gamma_1-\int_{\Omega}\nabla \phi^{(n)}\cdot \nabla\omega dx, 
 \]
 which implies that  
 \[
 	|(\phi^{(n)}_{tt}, \omega)_{H^{-1}\times H^1}|\leq C(\|\delta^{(n)}\|_{L^2}, \|\nabla\phi^{(n)}\|_{L^2})\|\omega\|_{H^1}.
\]
Hence, $\phi^{(n)}_{tt}\in L^{\infty}([0, T], (H^{1}(\Omega))^{\prime})$. Thus, it follows that $\|\{\Phi^{(n)}(t), \Psi^{(n)}(t)\}\|_{L^{\infty}([0, T], \mathcal{N})}$ is bounded uniformly. Due to $\mathcal{H}\subset\subset\mathcal{M}\subset\mathcal{N}$, we deduce from Aubin's theorem \cite{Simon} that
 \begin{eqnarray*}
	\{\Phi^{(n)}(t), \Psi^{(n)}(t)\}\xrightarrow{\textrm{strongly}} \{\hat{\Phi}(t), \hat{\Psi}(t)\},\quad \textrm{in $L^{\infty}([0, T]; \mathcal{M})$},
\end{eqnarray*} 
which gives
\begin{equation}\label{step4_3}
\|\{\hat{\Phi}(t), \hat{\Psi}(t)\}\|_{C([0, T]; \mathcal{M})}=\textrm{LOT}(\Phi^{(n)}, \Psi^{(n)}) =1.
\end{equation} 
By~\eqref{false} and~\eqref{step4_2}, we have $\hat{\delta}_t =0$. Let $\Theta(t, x)=\hat{\phi}_{tt}$. Then we have
\begin{equation*}
\left
    \{
        \begin{array}{ll}
             \Theta_{tt} = c^2\triangle \Theta, \quad \textrm{in $[0, T] \times \Omega$,}\\
            \frac{\partial \Theta}{\partial \mathbf{n}}= 0, \quad \textrm{on $[0, T] \times \Gamma_0$,}\\
            \Theta = 0,  \quad \textrm{on $[0, T] \times \Gamma_1$}.   
        \end{array} 
\right.
\end{equation*}
By Holmgren's Uniqueness Theorem, choosing $T>2\textrm{diam}(\Omega)$, it follows that $\Theta=\hat{\phi}_{tt}=0$. And then we obtain
\begin{equation*}
\left
    \{
        \begin{array}{ll}
             \triangle\hat{\phi} = 0, \quad \textrm{in $ \Omega$,}\\
            \frac{\partial \hat{\phi}}{\partial \mathbf{n}}= 0, \quad \textrm{on $ \Gamma$,} 
        \end{array} 
\right.
\end{equation*}
which implies that $\hat{\phi}=C$ (constant), for $t\in [0, T]$. Due to the appropriate choice of the state space $\mathcal{H}$, we have $\hat{\phi}=0$. By the constraint $\rho\int_{\Omega}\hat{\phi}\mathrm{d}x = \frac{1}{\kappa}\int_{\Gamma_1}\hat{\delta}\mathrm{d}\Gamma$, we get $\int_{\Gamma_1}\hat{\delta}\mathrm{d}\Gamma=0$.
This conclusion, combined with the boundary condition
\[
- \mathrm{div}_{\Gamma}\left(\sigma\nabla_{\Gamma} \hat{\delta}\right) + k \hat{\delta} = 0, \quad \textrm{on $\Gamma_1$}
\]
yields $\hat{\delta} = 0$.
We thus obtain $\{\hat{\Phi}(t), \hat{\Psi}(t)\}=0$, which contradicts~\eqref{step4_3}. Therefore, we complete the proof of inequality~\eqref{step4_1}.

Furthermore, the estimate~\eqref{step4_01} reduces to
\begin{equation}\label{step4_4}
\begin{split}
&(T-2\epsilon_0 - C(h, \rho, m, d)+\tau_2 T)\mathcal{E}(T) \\
&\leqslant C(T, \epsilon_0, \tau_i, h, m, \rho, d, k, N_1, N_2)\int_{0}^{T}\int_{\Gamma_1}\left[|\zeta \delta_{tt}|^2 +|\delta_t|^2 \right]\mathrm{d}\Gamma \mathrm{d}t.
\end{split}
\end{equation}
Notice that $0<\zeta(t)<1$, $t\in(0, T)$. Then by choosing $T$ large enough, the estimate~\eqref{step4_4} yields~\eqref{ineq:obs_abc}. This concludes the proof of Theorem~\ref{observability}.
\end{proof}

The estimate~\eqref{ineq:obs_abc} proposed in Theorem~\ref{observability} is the corresponding observability inequality of the problem~\eqref{1}-\eqref{4}. Indeed, setting $\mathcal{V} = \left[H^{1}(0, T; \mathcal{H})\right]$, $\mathcal{U} = \left[H^{1}(0, T; \mathcal{H})\right]^{\prime}$,  and $\mathcal{W}=L^2(0, T; \mathcal{H})$, we introduce a self-adjoint isomorphism $\Lambda: \mathcal{V}\rightarrow\mathcal{W}$ with
\begin{equation}\label{norm:spaces}
(\mathbf{ a}, \mathbf{ b})_{\mathcal{V}} = (\Lambda\mathbf{ a}, \Lambda\mathbf{ b} )_{\mathcal{W}}, \quad (\mathbf{ a}, \mathbf{ b})_{\mathcal{U}} = (\Lambda^{-1}\mathbf{ a}, \Lambda^{-1}\mathbf{ b} )_{\mathcal{W}}, \quad \|\Lambda\mathbf{ a}\|^2_{\mathcal{W}} = \|\mathbf{ a}\|^2_{\mathcal{V}} = \|\mathbf{ a}\|^2_{\mathcal{W}} + \|\frac{d}{dt}\mathbf{ a}\|^2_{\mathcal{W}}.
\end{equation}
Let the linear map $\mathcal{F}_{T}$ be as follows
\begin{equation}\label{linear_map}
	\mathcal{F}_{T}: \mathcal{U}\supset \mathrm{Dom}(\mathcal{F}_T) \rightarrow \mathcal{H}, \quad \mathcal{F}_{T}\mathbf{u}=\int_{0}^{T}S(T-t)\mathcal{B}\mathbf{u}\mathrm{d}t.
\end{equation}
Then we deduce
\[
(\mathcal{F}_{T}(\mathbf{u}), z)_{\mathcal{H}} = (\int_{0}^{T}S(T-t)\mathcal{B}\mathbf{u}\mathrm{d}t, z)_{\mathcal{H}} = (\mathbf{u}, \mathcal{B}^{\ast}S^{\ast}(T-t)z)_{\mathcal{W}}
\]
and
\begin{equation*}
	(\mathbf{u}, \mathcal{F}^{\ast}_{T}(z))_{\mathcal{U}} = (\Lambda^{-1}\mathbf{u}, \Lambda^{-1}\mathcal{F}^{\ast}_{T}(z) )_{\mathcal{W}} = (\mathbf{u}, \Lambda^{-2}\mathcal{F}^{\ast}_{T}(z) )_{\mathcal{W}}.
\end{equation*}
Thanks to the fact $(\mathcal{F}_{T}(\mathbf{u}), z)_{\mathcal{H}}=(\mathbf{u}, \mathcal{F}^{\ast}_{T}(z))_{\mathcal{U}}$, we obtain
\begin{equation}\label{map:rep}
\left(\Lambda^{-2}\mathcal{F}^{\ast}_{T}(z)\right)(t) = \mathcal{B}^{\ast}S^{\ast}(T-t)z.
\end{equation}
Using the solution $Z(t) = S^{\ast}(T-t)z$ of the backward system~\eqref{adjoint:abstract}, it implies from~\eqref{norm:spaces} and \eqref{map:rep} that
\begin{equation*}
\begin{split}
\|\mathcal{F}^{\ast}_{T}(z)\|^2_{\mathcal{U}} &= \|\Lambda^{-1}\mathcal{F}^{\ast}_{T}(z)\|^2_{\mathcal{W}}\\
& = \|\Lambda \mathcal{B}^{\ast}S^{\ast}(T-t)z\|^2_{\mathcal{W}}\\
& = \|\mathcal{B}^{\ast}Z(t) \|^2_{\mathcal{V}} \\
&= \frac{1}{m^2}\int_{0}^{T}\left\{ \|\delta_t\|^2_{L^2(\Gamma_1)}+\|\delta_{tt}\|^2_{L^2(\Gamma_1)}\right\}\mathrm{d}t.
\end{split}
\end{equation*}
Therefore, the abstract inequality~\eqref{ineq:obs} takes the equivalent form~\eqref{ineq:obs_abc}, from which one immediately derives the following result.

\begin{corollary}\label{control:exact}
The constrained potential problem~\eqref{1}-\eqref{4} is exactly controllable in time $T$.
\end{corollary}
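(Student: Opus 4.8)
The plan is to read the statement straight off the abstract machinery assembled in Section~\ref{subsec:obs} together with Theorem~\ref{observability}. Recall from Section~\ref{subsec:obs} that the evolution equation~\eqref{evolution} is exactly controllable in time $T$ if and only if the reachability map $\mathcal{F}_T$ is onto, and that, by the closed range theorem \cite[Theorem 4.15]{rudin1991functional} applied to the (closed, densely defined) operator $\mathcal{F}_T$, this is in turn equivalent to the observability inequality~\eqref{ineq:obs}, i.e.\ to the existence of $c>0$ with $\|\mathcal{F}^{\ast}_{T}(z)\|_{\mathcal{U}}\geqslant c\|z\|_{\mathcal{H}}$ for all $z\in\mathcal{H}$. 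So the whole task reduces to exhibiting such a $c$.

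For terminal data $z=(\phi_0,\delta_0,\phi_1,\delta_1)\in\mathrm{Dom}(\mathcal{A}^{\ast})$, the curve $Z(t)=S^{\ast}(T-t)z=(\phi,\delta,\phi_t,\delta_t)$ is the strong solution of the backward system~\eqref{adjoint}, and the computation carried out just before the corollary — using the representation~\eqref{map:rep} of $\mathcal{F}^{\ast}_{T}$ and the isometry properties~\eqref{norm:spaces} of $\Lambda$ — gives the identity $\|\mathcal{F}^{\ast}_{T}(z)\|^2_{\mathcal{U}}=\frac{1}{m^2}\int_{0}^{T}\bigl\{\|\delta_t\|^2_{L^2(\Gamma_1)}+\|\delta_{tt}\|^2_{L^2(\Gamma_1)}\bigr\}\,\mathrm{d}t$. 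Substituting this into the observability estimate~\eqref{ineq:obs_abc} of Theorem~\ref{observability} yields $\|z\|^2_{\mathcal{H}}\leqslant m^2 C_T\,\|\mathcal{F}^{\ast}_{T}(z)\|^2_{\mathcal{U}}$, which is precisely the required bound with $c=(m^2C_T)^{-1/2}$, valid for every $z\in\mathrm{Dom}(\mathcal{A}^{\ast})$.

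It then remains to upgrade the inequality from $\mathrm{Dom}(\mathcal{A}^{\ast})$ to all of $\mathcal{H}$. Since $\mathrm{Dom}(\mathcal{A}^{\ast})$ is dense in $\mathcal{H}$ (Proposition~\ref{prop:operator}) and $z\mapsto\mathcal{F}^{\ast}_{T}(z)$ is continuous from $\mathcal{H}$ into $\mathcal{U}$ — equivalently, by~\eqref{map:rep} and the strong continuity of $S^{\ast}(t)$, the map $z\mapsto\mathcal{B}^{\ast}S^{\ast}(T-\cdot)z$ is continuous from $\mathcal{H}$ into $\mathcal{V}$ — both sides depend continuously on $z$, so the estimate passes to the closure. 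Hence $\mathcal{F}_T$ is onto, the abstract system~\eqref{evolution} is exactly controllable in time $T$, and since by definition a solution $(u,v)$ of~\eqref{1}--\eqref{4} is just the pair of first two components of the corresponding solution $\mathbf{X}=(u,v,u_t,v_t)$ of~\eqref{evolution}, the constrained potential problem~\eqref{1}--\eqref{4} inherits exact controllability. The only delicate point in this argument is that last step: one must check that the observability inequality, which Theorem~\ref{observability} supplies only for strong solutions, genuinely propagates to the whole state space, and that $\mathcal{F}_T$ — being merely densely defined and unbounded between $\mathcal{U}$ and $\mathcal{H}$ — is closed, so that the surjectivity criterion~\eqref{ineq:obs} may legitimately be invoked; both are secured by the density of $\mathrm{Dom}(\mathcal{A}^{\ast})$ and the continuous dependence $z\mapsto S^{\ast}(T-\cdot)z$, but they should be verified rather than merely quoted.
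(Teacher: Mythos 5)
Your argument is essentially the paper's own: Corollary~\ref{control:exact} is obtained there exactly as you do it, by combining the HUM equivalence of Section~\ref{subsec:obs} (exact controllability $\Leftrightarrow$ surjectivity of $\mathcal{F}_T$ $\Leftrightarrow$ the abstract inequality~\eqref{ineq:obs}) with the identity $\|\mathcal{F}^{\ast}_{T}(z)\|^2_{\mathcal{U}}=\frac{1}{m^2}\int_{0}^{T}\{\|\delta_t\|^2_{L^2(\Gamma_1)}+\|\delta_{tt}\|^2_{L^2(\Gamma_1)}\}\,\mathrm{d}t$ and the observability estimate~\eqref{ineq:obs_abc} of Theorem~\ref{observability}. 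Your final closure step is extra diligence the paper does not carry out, but note that its justification is shaky as stated: strong continuity of $S^{\ast}(t)$ gives continuity of $z\mapsto\mathcal{B}^{\ast}S^{\ast}(T-\cdot)z$ into $C([0,T];\mathcal{H})\subset\mathcal{W}$, not into $\mathcal{V}=H^{1}(0,T;\mathcal{H})$ (the time derivative involves the unbounded $\mathcal{A}^{\ast}$), so $\mathcal{F}^{\ast}_{T}$ remains only densely defined and the surjectivity criterion must be invoked for this closed, densely defined map — a functional-analytic subtlety the paper itself leaves implicit.
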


\subsection{Exact Controllability}
\label{subsec:controllability}

As discussed above, we have provided an answer to question (Q1). 
In the sequel we shall deduce an explicit control function $f\in\mathcal{U}$ that can steer any initial state belonging to $\mathcal{H}$ of the acoustic motion to a predetermined state in time $T$. 

Let us first introduce the functional $\mathcal{J}:\mathcal{H}\rightarrow\mathbb{R}$ as
\begin{equation*}
\begin{split}
&\mathcal{J}(\phi_0, \delta_0, \phi_1, \delta_1) \\
&= \frac{1}{2}\int_{0}^{T}\left\{ \|\delta_t\|^2_{L^2(\Gamma_1)}+\|\zeta\delta_{tt}\|^2_{L^2(\Gamma_1)}\right\}\mathrm{d}t + \int_{\Omega}\rho u_{t}(0)\phi_t(0)\mathrm{d}x + \int_{\Gamma_1}m c^2 v_{t}(0)\delta_t(0)\mathrm{d}\Gamma\\
&\quad - \int_{\Omega}\rho u(0)\phi_{tt}(0)\mathrm{d}\Gamma - \int_{\Gamma_1}\rho c^2 v(0)\phi_t(0)\mathrm{d}\Gamma + \int_{\Gamma_1}\rho c^2 u(0)\delta_t(0)\mathrm{d}\Gamma\\
&\quad + \int_{\Gamma_1}c^2 d v(0)\delta_t(0)\mathrm{d}\Gamma - \int_{\Gamma_1}m c^2 v(0)\delta_{tt}(0)\mathrm{d}\Gamma,
\end{split}
\end{equation*}
where $({\phi}, {\delta})$ is the strong solution of system~\eqref{adjoint} subject to the initial data~$({\phi}_0, {\delta}_0, {\phi}_1, {\delta}_1)$, and $({u}, {w})$ is the strong solution of problem~\eqref{1}-\eqref{4}. Then we notice that
\begin{itemize}
\item the functional $\mathcal{J}$ is continuous and strongly convex;
\item and by the estimate~\eqref{step4_4} derived in the proof of Theorem~\ref{observability}, one has
\[
\mathcal{J}(\phi_0, \delta_0, \phi_1, \delta_1) \geqslant C_{T}\|(\phi_0, \delta_0, \phi_1, \delta_1)\|^2_{\mathcal{H}} - C_0\|(\phi_0, \delta_0, \phi_1, \delta_1)\|^2_{\mathcal{H}}
\]
which means $\mathcal{J}$ is coercive. 
\end{itemize}
Using the results in~\cite[Chapter 8.2.4, pp. 477-478]{evans2022partial}, the functional $\mathcal{J}$ possesses a unique minimizer, which solves the corresponding Euler--Lagrange equation
\begin{equation}\label{euler-lagrange}
\begin{split}
&\int_{0}^{T}\int_{\Gamma_1}\left(\hat{\delta}_t\delta_t+ \zeta^2\hat{\delta}_{tt}\delta_{tt} \right) \mathrm{d}\Gamma \mathrm{d}t + \int_{\Omega}\rho u_{t}(0)\phi_t(0)\mathrm{d}x + \int_{\Gamma_1}m c^2 v_{t}(0)\delta_t(0)\mathrm{d}\Gamma\\
&\quad - \int_{\Omega}\rho u(0)\phi_{tt}(0)\mathrm{d}\Gamma - \int_{\Gamma_1}\rho c^2 v(0)\phi_t(0)\mathrm{d}\Gamma + \int_{\Gamma_1}\rho c^2 u(0)\delta_t(0)\mathrm{d}\Gamma\\
&\quad + \int_{\Gamma_1}c^2 d v(0)\delta_t(0)\mathrm{d}\Gamma - \int_{\Gamma_1}m c^2 v(0)\delta_{tt}(0)\mathrm{d}\Gamma= 0
\end{split}
\end{equation}
for every strong solution $(\phi, \delta)$ of~\eqref{adjoint}.

Next, we proceed to state another main result.
\begin{theorem}\label{controllability}
Suppose that $(\hat{\phi}, \hat{\delta}, \hat{\phi}_t, \hat{\delta}_t)$ is the minimizer of the functional $\mathcal{J}(\cdot)$. Then the control function 
\[
f = \frac{1}{c^2}\left[\hat{\delta_t} - (\zeta^2\hat{\delta}_{tt})_t\right]
\]
ensures the exact controllability of problem~\eqref{1}-\eqref{4}.
\end{theorem}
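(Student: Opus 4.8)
The argument is the verification half of Lions' HUM: the functional $\mathcal{J}$ has already been shown to be continuous, strongly convex and coercive (the coercivity being precisely the observability estimate \eqref{step4_4}), so it has a unique minimizer $(\hat\phi_0,\hat\delta_0,\hat\phi_1,\hat\delta_1)\in\mathcal{H}$, characterized by the Euler--Lagrange identity \eqref{euler-lagrange} holding for every strong solution $(\phi,\delta)$ of the adjoint system \eqref{adjoint}. What remains is to check that the control $f=\tfrac{1}{c^2}[\hat\delta_t-(\zeta^2\hat\delta_{tt})_t]$ steers \eqref{1}--\eqref{4} to the prescribed target; equivalently (since $\mathrm{Dom}(\mathcal{A}^\ast)$ is dense in $\mathcal{H}$), that $\bigl(\mathbf{X}(T)-\mathbf{X}_T,\,\mathbf{z}\bigr)_{\mathcal{H}}=0$ for every terminal datum $\mathbf{z}=(\phi_0,\delta_0,\phi_1,\delta_1)\in\mathrm{Dom}(\mathcal{A}^\ast)$, where $\mathbf{X}(t)=(u,v,u_t,v_t)$ is the state produced by $f$ and $\mathbf{X}_T$ is the target.

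The first ingredient is the transposition identity. Writing \eqref{1}--\eqref{4} abstractly as $\mathbf{X}'=\mathcal{A}\mathbf{X}+\mathcal{B}f$ and the adjoint as $\mathbf{Z}'=-\mathcal{A}^\ast\mathbf{Z}$, $\mathbf{Z}(T)=\mathbf{z}$, one computes $\tfrac{d}{dt}(\mathbf{X}(t),\mathbf{Z}(t))_{\mathcal{H}}=(\mathcal{B}f,\mathbf{Z}(t))_{\mathcal{H}}$ and integrates over $[0,T]$; since $\mathcal{B}=\mathrm{diag}\{0,0,0,\tfrac1m\}$ and $\mathbf{Z}=(\phi,\delta,\phi_t,\delta_t)$, this gives
\[
(\mathbf{X}(T),\mathbf{z})_{\mathcal{H}}-(\mathbf{X}(0),\mathbf{Z}(0))_{\mathcal{H}}=\frac1\rho\int_0^T\!\!\int_{\Gamma_1}f\,\delta_t\,\mathrm{d}\Gamma\,\mathrm{d}t .
\]
(The same identity follows by multiplying \eqref{1} by $\phi$ and the first line of \eqref{2} by $\delta$, integrating by parts in $x$ and $t$, and using that the Neumann/acoustic boundary conditions of \eqref{2} and \eqref{adjoint} are exactly dual; the $t=0$ traces recombine into $(\mathbf{X}(0),\mathbf{Z}(0))_{\mathcal{H}}$.) One establishes it first for strong solutions and then extends to all data by density, using Proposition~\ref{prop:operator} and the continuity of $\mathcal{F}_T$ from \eqref{linear_map}. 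The second ingredient is to insert the explicit $f$: because $\zeta(0)=\zeta(T)=0$, an integration by parts in $t$ turns the right-hand side into
\[
\frac1\rho\int_0^T\!\!\int_{\Gamma_1}f\,\delta_t\,\mathrm{d}\Gamma\,\mathrm{d}t=\frac{1}{\rho c^2}\int_0^T\!\!\int_{\Gamma_1}\bigl(\hat\delta_t\,\delta_t+\zeta^2\hat\delta_{tt}\,\delta_{tt}\bigr)\mathrm{d}\Gamma\,\mathrm{d}t ,
\]
which, up to the physical constants already absorbed into the definition of $\mathcal{J}$, is the quadratic term of \eqref{euler-lagrange}. The remaining, linear, terms of $\mathcal{J}$ are by construction the pairing $(\mathbf{X}(0),\mathbf{Z}(0))_{\mathcal{H}}-(\mathbf{X}_T,\mathbf{z})_{\mathcal{H}}$ rewritten through one further integration by parts in $t$ (this is the origin of the second-order boundary traces $\phi_{tt}(0)$, $\delta_{tt}(0)$ appearing in $\mathcal{J}$, consistent with \eqref{map:rep}). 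Substituting the transposition identity into \eqref{euler-lagrange} then collapses everything to $(\mathbf{X}(T)-\mathbf{X}_T,\mathbf{z})_{\mathcal{H}}=0$ for all admissible $\mathbf{z}$, hence $\mathbf{X}(T)=\mathbf{X}_T$, which is the asserted exact controllability (and re-proves Corollary~\ref{control:exact} with an explicit control).

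The main obstacle is regularity: the control carries two time derivatives of $\hat\delta$, so one must first know that the minimizer yields a solution $(u,v)$ of \eqref{1}--\eqref{4} for which both the transposition identity and the time integrations by parts above are legitimate. I would handle this exactly as in the proof of Theorem~\ref{observability}: run the computation on the dense class of strong solutions, where all terms are classical, and then pass to the limit using the coercivity/boundedness already available and the smoothing effect of the cut-off $\zeta$ (which makes $\zeta^2\hat\delta_{tt}$ an admissible boundary forcing and kills all endpoint contributions). A secondary, purely bookkeeping difficulty is to track the constants $\rho,c,m,d,k$ and the $t=0$ boundary terms so that the cancellation in the final step is exact rather than merely up to a nonzero factor; this is precisely what the particular weights in the definitions of $\mathcal{J}$ and of the spaces $\mathcal{V},\mathcal{U},\mathcal{W}$ in \eqref{norm:spaces} are designed to arrange.
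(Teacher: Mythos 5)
Your overall strategy is the HUM verification that the paper also uses, but you run it at the abstract semigroup level: integrate $\tfrac{d}{dt}(\mathbf{X},\mathbf{Z})_{\mathcal{H}}=(\mathcal{B}f,\mathbf{Z})_{\mathcal{H}}$, insert the explicit $f$, integrate by parts in $t$ using $\zeta(0)=\zeta(T)=0$, and cancel against the Euler--Lagrange identity \eqref{euler-lagrange}. The paper instead obtains the same cancellation by pairing $(u,v)$ with the \emph{time-differentiated} adjoint equations and doing explicit multiplier computations (\eqref{explicit_1}--\eqref{explicit_3}), which produce the duality identity \eqref{explicit_4} with the second-order traces $\phi_{tt}$, $\delta_{tt}$ at $t=0,T$, and then subtracts \eqref{euler-lagrange}. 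Your route is cleaner, and your sign and constant bookkeeping ($(\mathcal{B}f,\mathbf{Z})_{\mathcal{H}}=\tfrac1\rho\int_{\Gamma_1}f\delta_t$, the factor $\tfrac{1}{\rho c^2}$ in front of the quadratic term, and the $+\tfrac{1}{c^2}$ sign of $f$, which matches the theorem statement while the paper's proof inserts $-\tfrac{1}{c^2}(\cdot)$) is internally consistent.

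There are, however, two points where your argument as written does not go through. First, the paper's functional $\mathcal{J}$ contains \emph{no} target data: its linear part involves only $u(0),u_t(0),v(0),v_t(0)$ paired with adjoint traces at $t=0$. So your claim that the linear terms are ``the pairing $(\mathbf{X}(0),\mathbf{Z}(0))_{\mathcal{H}}-(\mathbf{X}_T,\mathbf{z})_{\mathcal{H}}$'' is false, and your conclusion $\mathbf{X}(T)=\mathbf{X}_T$ for an arbitrary prescribed target cannot follow from the Euler--Lagrange equation, in which $\mathbf{X}_T$ never appears. What the minimizer control actually yields is $\mathbf{X}(T)=0$, i.e.\ null controllability --- exactly what the paper's proof establishes by choosing $\phi_1=\delta_1=0$ to get $u(T)=v(T)=0$ and then $(u_t(T),v_t(T))=(0,0)$ --- with exact controllability recovered through the null-implies-exact argument of Section~\ref{subsec:obs}. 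Second, the identity you do need, namely that the linear part of $\mathcal{J}$ equals $\rho c^{2}(\mathbf{x},\mathbf{Z}(0))_{\mathcal{H}}$, is asserted ``by construction'' but is precisely the nontrivial computational content of the theorem: it requires using $\phi_{tt}(0)=c^{2}\triangle\phi(0)$ together with $\partial_{\mathbf{n}}\phi=\delta_t$ on $\Gamma_1$, $\partial_{\mathbf{n}}\phi=0$ on $\Gamma_0$, the second adjoint equation at $t=0$ to eliminate $\delta_{tt}(0)$, and a tangential integration by parts on $\Gamma_1$ (not a further integration by parts in $t$). The identity is true, so your proof can be completed along these lines, but as it stands this step --- which is what the paper's equations \eqref{explicit_1}--\eqref{explicit_5} carry out --- is missing, and the target-data misreading must be corrected to ``steering to zero''.
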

\begin{proof}
Let $(u, v)$ and $(\phi, \delta)$ be the strong solutions of problem~\eqref{1}-\eqref{4} and~\eqref{adjoint}, respectively. Assume that $(\phi, \delta)$ is smooth enough. Then by integrating by parts, we deduce from the first equation of~\eqref{adjoint} that
\begin{equation*}
\begin{split}
0=&\int_{0}^{T}\int_{\Omega}\rho u\left( {\phi}_{ttt} - c^2\triangle{\phi_t}\right)\mathrm{d}x \mathrm{d}t\\
=&\left[\int_{\Omega}\rho u {\phi}_{tt} \mathrm{d}x\right]_{t=0}^{t=T} - \left[\int_{\Omega}\rho u_{t} {\phi_t}\mathrm{d}x \right]_{t=0}^{t=T} + \int_{0}^{T}\int_{\Omega}\rho c^2\left( \triangle u{\phi_t} - u\triangle{\phi_t}\right)\mathrm{d}x \mathrm{d}t\\
=&\left[\int_{\Omega}\rho u {\phi}_{tt} \mathrm{d}x\right]_{t=0}^{t=T} - \left[\int_{\Omega}\rho u_{t} {\phi_t}\mathrm{d}x \right]_{t=0}^{t=T} + \int_{0}^{T}\int_{\Gamma}\rho c^2 \left(\partial_{\mathbf{n}}u{\phi_t} - u\partial_{\mathbf{n}}{\phi_t} \right) \mathrm{d}\Gamma \mathrm{d}t,
\end{split}
\end{equation*}
which implies from the boundary conditions that
\begin{equation} \label{explicit_1}
\left[\int_{\Omega}\rho u {\phi}_{tt} \mathrm{d}x\right]_{t=0}^{t=T} - \left[\int_{\Omega}\rho u_{t} {\phi_t}\mathrm{d}x \right]_{t=0}^{t=T} + \int_{0}^{T}\int_{\Gamma_1}\rho c^2 \left(v_{t}{\phi_t} - u{\delta}_{tt} \right) \mathrm{d}\Gamma \mathrm{d}t=0.
\end{equation}
Similarly, we also deduce from the second equation of~\eqref{adjoint} and~\eqref{2} that
\begin{equation}\label{explicit_2}
\begin{split}
0=&\int_{0}^{T}\int_{\Gamma_1}c^{2}v\left( \rho {\phi}_{tt} + m {\delta}_{ttt} - \mathrm{div}_{\Gamma}\left(\sigma\nabla_{\Gamma} {\delta_t}\right) - d {\delta}_{tt} + k {\delta_t} \right)\mathrm{d}\Gamma \mathrm{d}t\\
=&\left[\int_{\Gamma_1}\rho c^{2}v  {\phi_t}\mathrm{d}\Gamma\right]_{t=0}^{t=T} - \int_{0}^{T}\int_{\Gamma_1}\rho c^{2}v_{t}  {\phi_t}\mathrm{d}\Gamma \mathrm{d}t + \int_{0}^{T}\int_{\Gamma_1}m c^{2}v  {\delta}_{ttt}\mathrm{d}\Gamma \mathrm{d}t \\
&+\int_{0}^{T}\int_{\Gamma_1}c^{2}v\left[- \mathrm{div}_{\Gamma}\left(\sigma\nabla_{\Gamma} {\delta_t}\right)  - d {\delta}_{tt} + k {\delta_t} \right]\mathrm{d}\Gamma \mathrm{d}t
\end{split}
\end{equation}
and
\begin{equation} \label{explicit_3}
\begin{split}
 &\int_{0}^{T}\int_{\Gamma_1}m c^{2}v {\delta}_{ttt}\mathrm{d}\Gamma \mathrm{d}t \\
 &= \left[\int_{\Gamma_1}m c^{2}v  {\delta}_{tt}\mathrm{d}\Gamma\right]_{t=0}^{t=T} - \left[\int_{\Gamma_1}m c^{2}v_{t}  {\delta_t}\mathrm{d}\Gamma\right]_{t=0}^{t=T} + \int_{0}^{T}\int_{\Gamma_1}m c^{2}v_{tt}  {\delta_t}\mathrm{d}\Gamma \mathrm{d}t\\
 &= \left[\int_{\Gamma_1}m c^{2}v  {\delta}_{tt}\mathrm{d}\Gamma\right]_{t=0}^{t=T} - \left[\int_{\Gamma_1}m c^{2}v_{t}  {\delta_t}\mathrm{d}\Gamma\right]_{t=0}^{t=T} \\
 &\quad+ \int_{0}^{T}\int_{\Gamma_1} c^{2}  {\delta_t}\left[ -\rho u_{t} + \mathrm{div}_{\Gamma}\left(\sigma\nabla_{\Gamma} {v}\right) -d v_{t} - k v - f\right]\mathrm{d}\Gamma \mathrm{d}t\\
  &=  \left[\int_{\Gamma_1}m c^{2}v  {\delta}_{tt}\mathrm{d}\Gamma\right]_{t=0}^{t=T} - \left[\int_{\Gamma_1}m c^{2}v_{t}  {\delta_t}\mathrm{d}\Gamma\right]_{t=0}^{t=T} \\
  &\quad -\left[\int_{\Gamma_1}\rho c^{2}  {\delta_t} u\mathrm{d}\Gamma\right]_{t=0}^{t=T} + \int_{0}^{T}\int_{\Gamma_1}\rho c^{2}  {\delta}_{tt} u\mathrm{d}\Gamma \mathrm{d}t - \int_{0}^{T}\int_{\Gamma_1}c^{2}\delta_t f\mathrm{d}\Gamma \mathrm{d}t\\
 &\quad -\left[\int_{\Gamma_1} c^{2} d {v}\delta_t\mathrm{d}\Gamma \right]_{t=0}^{t=T}+ \int_{0}^{T}\int_{\Gamma_1} c^{2}  {v}\left[ \mathrm{div}_{\Gamma}\left(\sigma\nabla_{\Gamma} {\delta_t}\right) + d \delta_{tt} - k \delta_t\right]\mathrm{d}\Gamma \mathrm{d}t.
 \end{split}
\end{equation}

Combing~\eqref{explicit_1}, \eqref{explicit_2} and~\eqref{explicit_3} yields
\begin{equation}\label{explicit_4}
\begin{split}
&\left[\int_{\Omega}\rho u {\phi}_{tt} \mathrm{d}x\right]_{t=0}^{t=T} - \left[\int_{\Omega}\rho u_{t} {\phi_t}\mathrm{d}x \right]_{t=0}^{t=T} + \left[\int_{\Gamma_1}\rho c^{2}v  {\phi_t}\mathrm{d}\Gamma\right]_{t=0}^{t=T}\\
&+\left[\int_{\Gamma_1}m c^{2}v  {\delta}_{tt}\mathrm{d}\Gamma\right]_{t=0}^{t=T} - \left[\int_{\Gamma_1}m c^{2}v_{t}  {\delta_t}\mathrm{d}\Gamma\right]_{t=0}^{t=T}-\left[\int_{\Gamma_1}\rho c^{2}  {\delta_t} u\mathrm{d}\Gamma\right]_{t=0}^{t=T}\\
&-\left[\int_{\Gamma_1} c^{2} d {v}\delta_t\mathrm{d}\Gamma \right]_{t=0}^{t=T} - \int_{0}^{T}\int_{\Gamma_1}c^{2}\delta_t f\mathrm{d}\Gamma \mathrm{d}t=0.
 \end{split}
\end{equation}
Plugging $f = -\frac{1}{c^2}\left(\hat{\delta_t} - (\zeta^2\hat{\delta}_{tt})_t\right)$ into~\eqref{explicit_4} and integrating by parts, we obtain
\begin{equation}\label{explicit_5}
\begin{split}
&\left[\int_{\Omega}\rho u {\phi}_{tt} \mathrm{d}x\right]_{t=0}^{t=T} - \left[\int_{\Omega}\rho u_{t} {\phi_t}\mathrm{d}x \right]_{t=0}^{t=T} + \left[\int_{\Gamma_1}\rho c^{2}v  {\phi_t}\mathrm{d}\Gamma\right]_{t=0}^{t=T}\\
&+\left[\int_{\Gamma_1}m c^{2}v  {\delta}_{tt}\mathrm{d}\Gamma\right]_{t=0}^{t=T} - \left[\int_{\Gamma_1}m c^{2}v_{t}  {\delta_t}\mathrm{d}\Gamma\right]_{t=0}^{t=T}-\left[\int_{\Gamma_1}\rho c^{2}  {\delta_t} u\mathrm{d}\Gamma\right]_{t=0}^{t=T}\\
&-\left[\int_{\Gamma_1} c^{2} d {v}\delta_t\mathrm{d}\Gamma \right]_{t=0}^{t=T}+ \int_{0}^{T}\int_{\Gamma_1} \left(\hat{\delta}_{t}\delta_t  + \zeta^2\hat{\delta}_{tt} \delta_{tt}\right)\mathrm{d}\Gamma \mathrm{d}t=0.
 \end{split}
\end{equation}
Subtracting~\eqref{explicit_5} from~\eqref{euler-lagrange} gives
\begin{equation}\label{explicit_6}
\begin{split}
&\int_{\Omega}\rho u_{t}(T)\phi_t(T)\mathrm{d}x + \int_{\Gamma_1}m c^2 v_{t}(T)\delta_t(T)\mathrm{d}\Gamma- \int_{\Omega}\rho u(T)\phi_{tt}(T)\mathrm{d}x - \int_{\Gamma_1}\rho c^2 v(T)\phi_t(T)\mathrm{d}\Gamma\\
&  + \int_{\Gamma_1}\rho c^2 u(T)\delta_t(T)\mathrm{d}\Gamma+ \int_{\Gamma_1}c^2 d v(T)\delta_t(T)\mathrm{d}\Gamma - \int_{\Gamma_1}m c^2 v(T)\delta_{tt}(T)\mathrm{d}\Gamma = 0.
\end{split}
\end{equation}
Choosing $\phi_1=0$ and $\delta_1=0$, one easily deduces 
\[
\int_{\Omega}\rho u(T)\phi_{tt}(T)\mathrm{d}x + \int_{\Gamma_1}m c^2 v(T)\delta_{tt}(T)\mathrm{d}\Gamma = 0, \quad \forall (\phi_0, \delta_0),
\]
which implies $u(T) = 0$ and $v(T) = 0$. Furthermore, the equation \eqref{explicit_6} reduces to
\[
\int_{\Omega}\rho u_{t}(T)\phi_t(T)\mathrm{d}x + \int_{\Gamma_1}m c^2 v_{t}(T)\delta_t(T)\mathrm{d}\Gamma = 0, \quad \forall (\phi_0, \delta_0).
\]
Hence, $(u_{t}(T), v_{t}(T))=(0, 0)$. This concludes the proof of Theorem~\ref{controllability}.
\end{proof}

\section{Strong Mixing}
\label{sec:mixing}

The goal of this section is to provide an answer to (Q2) for the problem~\eqref{1}-\eqref{4} perturbed by some random force $f$ of white noise type.


In the following, $(\Omega, \mathcal{F}, \mathbb{P})$ is a probability space with a right-continuous increasing family $\mathcal{F}=(\mathcal{F}_t)_{t\geqslant 0}$ of sub-$\sigma$-fields of $\mathcal{F}_0$ each containing $\mathbb{P}$-null sets. $\mathbb{E}(\cdot)$ stands for expectation with respect to the probability measure $\mathbb{P}$.  
Let $W(t)$, $t\geq 0$ be a $\mathcal{U}$-valued $Q$-Wiener process on~$(\Omega, \mathcal{F}, \mathbb{P})$, with the covariance operator $Q$. Here, $Q$ is a symmetric negative operator, which is also assumed to be linear and bounded on $\mathcal{U}$. Furthermore, the trace of $Q$, denoted by $\mathrm{Tr}Q$, is finite.

Since the random force $f: \mathbb{R}^{+}\rightarrow\mathcal{U}$ is of white noise type, the problem~\eqref{1}-\eqref{4} can be reformulated to be a linear stochastic evolution equation driven by a $Q$-Wiener process as follows 
\begin{equation} \label{linearSDE}
\left
    \{
        \begin{array}{ll}
            \mathrm{d}\mathbf{X}(t)=\mathcal{A}\mathbf{X}(t)\mathrm{d}t+ \mathcal{B}\mathrm{d}W(t),\\
            \mathbf{X}(0)=\mathbf{x}.  
        \end{array}
\right.
\end{equation}
By Proposition~\ref{prop:operator}, a linear operator $\mathcal{Q}_t$, $t > 0$, defined by
\[
	\mathcal{Q}_{t}\mathbf{x}=\int_{0}^{t}S(s)\mathcal{B}\mathcal{B}^{\ast}S^{\ast}(s)\mathbf{x} \mathrm{d}s, \quad \mathbf{x}\in\mathcal{H}
\]
is of trace class .
Then using Theorem 5.3.1 in \cite{PratoZabczyk}, there exists a unique $\mathcal{H}$-valued $\mathcal{F}_{t}$-adapted process $\mathbf{X}(t, \mathbf{x})$, $t\geq 0$, satisfies the following integral equation  
\begin{eqnarray*}
	\mathbf{X}(t, \mathbf{x}) =S(t)\mathbf{x} +\int_{0}^{t}S(t-s)\mathcal{B}\mathrm{d}W(s), \quad t\in [0, T]
\end{eqnarray*}
for any $\mathbf{x} \in \mathcal{H}$ is $\mathcal{F}_0$-measurable and $\mathbb{E}\|\mathbf{x}\|_{\mathcal{H}}^2\leq \infty$, which means $\mathbf{X}(t, \mathbf{x})$ is a mild solution of equation~\eqref{linearSDE}.

Let $P_t$ stands for the transition function of the family defined as the law of the solution $X(t, \mathbf{x})$ under the probability measure $\mathbb{P}$
\[
	P_t(\mathbf{x}, \Xi)=\mathbb{P}(X(t, \mathbf{x})\in \Xi), \quad \Xi\in B(\mathcal{H}), \quad t\geqslant 0.	
\]
The corresponding Markov semigroups are given by
\begin{equation*}
 \begin{aligned}
 	M_{t}&:  L^{\infty}(\mathcal{H})\rightarrow L^{\infty}(\mathcal{H}), \quad M_{t}f(\mathbf{x})=\int_{\mathcal{H}}P_{t}(\mathbf{x}, \mathrm{d}z)f(z),\\
	M^{\ast}_{t}&:  \mathcal{P}(\mathcal{H})\rightarrow \mathcal{P}(\mathcal{H}), \quad M^{\ast}_{t}\lambda(\Xi)=\int_{\mathcal{H}}P_{t}(\mathbf{x}, \Xi)\lambda(\mathrm{d}\mathbf{x}).
\end{aligned}
\end{equation*}
Note that these two semigroups satisfy the duality relation 
\[
(M_{t}f, \lambda)=(f, M_{t}^{\ast}\lambda), \quad \forall f\in C(\mathcal{H}), \quad \forall\lambda\in \mathcal{P}(\mathcal{H}). 
\]
A probability measure $\mu\in\mathcal{P}(\mathcal{H})$ is said to be invariant with respect to $M_{t}$ if and only if $M^{\ast}_{t}\mu = \mu$ for each $t\geqslant0$. We call a Markov semigroup $M_{t}$ is $t_0$-regular if all transition probabilities $P_{t_0}(\mathbf{x}, \cdot)$, $\mathbf{x}\in\mathcal{H}$, are mutually equivalent. 

In the sequel, we give a sufficient and necessary condition for the regularity property.

\begin{lemma}\label{regularity}
The Markov semigroup $M_{t}$, $t\geqslant0$, is $t$-regular if and only if the observability inequality~\eqref{ineq:obs_abc} holds.
\end{lemma}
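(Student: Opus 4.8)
The plan is to convert each of the two conditions in the statement into the single range inclusion $\mathrm{Ran}(S(t))\subseteq\mathrm{Ran}(\mathcal{Q}_t^{1/2})$ for the operators already at hand, and then to extract the observability inequality from it. First I would note that, since the mild solution is $\mathbf{X}(t,\mathbf{x})=S(t)\mathbf{x}+\int_0^t S(t-s)\mathcal{B}\,\mathrm{d}W(s)$ and the stochastic convolution is a centered Gaussian vector with covariance $\mathcal{Q}_t$, the transition law is the Gaussian measure $P_t(\mathbf{x},\cdot)=N\bigl(S(t)\mathbf{x},\mathcal{Q}_t\bigr)$; in particular the family $\{P_t(\mathbf{x},\cdot):\mathbf{x}\in\mathcal{H}\}$ shares the covariance $\mathcal{Q}_t$ and its members differ only by their means. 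By the Feldman--H\'ajek dichotomy for Gaussian measures with equal covariance (equivalently, the Cameron--Martin theorem; see \cite{PratoZabczyk}), $P_t(\mathbf{x},\cdot)$ and $P_t(\mathbf{y},\cdot)$ are mutually equivalent exactly when $S(t)(\mathbf{x}-\mathbf{y})\in\mathrm{Ran}(\mathcal{Q}_t^{1/2})$, and mutually singular otherwise. Hence $M_t$ is $t$-regular if and only if $\mathrm{Ran}(S(t))\subseteq\mathrm{Ran}(\mathcal{Q}_t^{1/2})$.

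The next step is to dualize this inclusion. By the classical operator-range comparison lemma (\cite[Appendix~B]{PratoZabczyk}), for the bounded operators $S(t)$ and $\mathcal{Q}_t^{1/2}$ on $\mathcal{H}$ one has $\mathrm{Ran}(S(t))\subseteq\mathrm{Ran}(\mathcal{Q}_t^{1/2})$ if and only if there is a constant $c_t>0$ with $\|S^{\ast}(t)\mathbf{z}\|_{\mathcal{H}}^2\leqslant c_t^2\,\|\mathcal{Q}_t^{1/2}\mathbf{z}\|_{\mathcal{H}}^2$ for all $\mathbf{z}\in\mathcal{H}$, where $\|\mathcal{Q}_t^{1/2}\mathbf{z}\|_{\mathcal{H}}^2=(\mathcal{Q}_t\mathbf{z},\mathbf{z})_{\mathcal{H}}=\int_0^t\|\mathcal{B}^{\ast}S^{\ast}(s)\mathbf{z}\|^2\,\mathrm{d}s$ by the definition of $\mathcal{Q}_t$. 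To recognize the right-hand side I would reuse the computation already performed just before Corollary~\ref{control:exact}: writing $\mathbf{z}=(\phi_0,\delta_0,\phi_1,\delta_1)$ and letting $\mathbf{Z}(s)=(\phi,\delta,\phi_t,\delta_t)$ be the solution of the backward system~\eqref{adjoint} with $\mathbf{Z}(t)=\mathbf{z}$, so that $\mathbf{Z}(0)=S^{\ast}(t)\mathbf{z}$, that computation — which rests on the representation~\eqref{map:rep} together with the norms~\eqref{norm:spaces} — gives $\int_0^t\|\mathcal{B}^{\ast}S^{\ast}(s)\mathbf{z}\|^2\,\mathrm{d}s=\tfrac1{m^2}\int_0^t\bigl\{\|\delta_t\|_{L^2(\Gamma_1)}^2+\|\delta_{tt}\|_{L^2(\Gamma_1)}^2\bigr\}\,\mathrm{d}s$. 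On the other hand, the dissipation identity recorded in Step~1 of the proof of Theorem~\ref{observability} reads $\|\mathbf{z}\|_{\mathcal{H}}^2=\|S^{\ast}(t)\mathbf{z}\|_{\mathcal{H}}^2+\tfrac{2d}{\rho}\int_0^t\|\delta_t\|_{L^2(\Gamma_1)}^2\,\mathrm{d}s$, whence also $\|S^{\ast}(t)\mathbf{z}\|_{\mathcal{H}}\leqslant\|\mathbf{z}\|_{\mathcal{H}}$. Feeding these two facts into $\|S^{\ast}(t)\mathbf{z}\|^2\leqslant c_t^2\|\mathcal{Q}_t^{1/2}\mathbf{z}\|^2$ shows that the latter holds for some $c_t$ if and only if $\|\mathbf{z}\|_{\mathcal{H}}^2\leqslant C_t\int_0^t\bigl\{\|\delta_t\|_{L^2(\Gamma_1)}^2+\|\delta_{tt}\|_{L^2(\Gamma_1)}^2\bigr\}\,\mathrm{d}s$ holds for some $C_t$, that is, if and only if~\eqref{ineq:obs_abc} holds. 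Chaining the equivalences proves the lemma for a fixed $t$; if one reads the statement for every $t$, the extension from the (large) times covered by Theorem~\ref{observability} is immediate from $\mathcal{Q}_{t+s}\geqslant S(s)\mathcal{Q}_t S^{\ast}(s)$ and the monotonicity of operator ranges.

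The step I expect to require the most care is the identification carried out in the second paragraph: one must verify that the covariance operator $\mathcal{Q}_t$ of the stochastic problem carries \emph{the same} boundary functional $\int_0^t\{\|\delta_t\|_{L^2(\Gamma_1)}^2+\|\delta_{tt}\|_{L^2(\Gamma_1)}^2\}$ that appears in~\eqref{ineq:obs_abc}, which forces one to use the control/noise space and the isomorphism $\Lambda$ of~\eqref{norm:spaces}--\eqref{map:rep} consistently with Section~\ref{subsec:observability}, and one must keep in mind that the comparison is between $\|S^{\ast}(t)\mathbf{z}\|_{\mathcal{H}}$ and $\|\mathbf{z}\|_{\mathcal{H}}$ rather than between $\mathbf{z}$ and itself — which is exactly why the dissipation identity $\mathcal{E}(t)=\mathcal{E}(0)+\tfrac{d}{\rho}\int_0^t\|\delta_t\|_{L^2(\Gamma_1)}^2$ is indispensable here. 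Everything else — the Gaussianity of $P_t(\mathbf{x},\cdot)$, the Feldman--H\'ajek dichotomy, and the operator-range comparison lemma — is standard and directly quotable.
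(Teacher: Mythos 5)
Your overall architecture is attractive and genuinely different from the paper's in the middle step: the paper passes from $t$-regularity to the range inclusion $\mathrm{Ran}\,S(t)\subset\mathrm{Ran}\,\mathcal{Q}_t^{1/2}$ via the Cameron--Martin formula, then identifies $\mathrm{Ran}\,\mathcal{Q}_T^{1/2}=\mathrm{Ran}\,\mathcal{F}_T$ and routes through null controllability $\Leftrightarrow$ exact controllability $\Leftrightarrow$ surjectivity of $\mathcal{F}_T$ $\Leftrightarrow$ the abstract observability inequality~\eqref{ineq:obs} of Section~\ref{subsec:obs}, whereas you dualize the range inclusion directly with Douglas' operator-range lemma and use the dissipation identity to trade $\|S^{\ast}(t)\mathbf{z}\|_{\mathcal{H}}$ for $\|\mathbf{z}\|_{\mathcal{H}}$. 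That shortcut is legitimate in principle and would be cleaner than the controllability detour.

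However, there is a genuine gap exactly at the step you flagged. With $\mathcal{Q}_t$ as defined in the paper, $(\mathcal{Q}_t\mathbf{z},\mathbf{z})_{\mathcal{H}}=\int_0^t\|\mathcal{B}^{\ast}S^{\ast}(s)\mathbf{z}\|_{\mathcal{H}}^2\,\mathrm{d}s$, and since $\mathcal{B}^{\ast}=\mathrm{diag}\{0,0,0,\tfrac1m\}$ acts pointwise in time, $\|\mathcal{B}^{\ast}S^{\ast}(s)\mathbf{z}\|_{\mathcal{H}}^2$ is proportional to $\|\delta_t(s)\|_{L^2(\Gamma_1)}^2$ alone; no $\delta_{tt}$ term can appear in $\|\mathcal{Q}_t^{1/2}\mathbf{z}\|^2$. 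The quantity $\tfrac1{m^2}\int_0^T\{\|\delta_t\|^2_{L^2(\Gamma_1)}+\|\delta_{tt}\|^2_{L^2(\Gamma_1)}\}\,\mathrm{d}t$ computed before Corollary~\ref{control:exact} is $\|\mathcal{F}_T^{\ast}\mathbf{z}\|_{\mathcal{U}}^2$, i.e.\ the $\mathcal{V}=H^1(0,T;\mathcal{H})$-norm of $\mathcal{B}^{\ast}Z$ obtained through the isomorphism $\Lambda$ of \eqref{norm:spaces}--\eqref{map:rep}; it is not $(\mathcal{Q}_t\mathbf{z},\mathbf{z})_{\mathcal{H}}$, which is only the $\mathcal{W}=L^2(0,T;\mathcal{H})$-norm. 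So your identity conflates two different quadratic forms, and if you run your chain honestly with the paper's $\mathcal{Q}_t$ you prove ``$M_t$ is $t$-regular $\Leftrightarrow$ $\|\mathbf{z}\|_{\mathcal{H}}^2\leqslant C\int_0^T\|\delta_t\|^2_{L^2(\Gamma_1)}\,\mathrm{d}t$'', a \emph{stronger} inequality than \eqref{ineq:obs_abc} (whose right-hand side has the extra $\delta_{tt}$ term) and not the statement of the lemma. To land on \eqref{ineq:obs_abc} you need the additional, nontrivial comparison between the forms $\int\|\delta_t\|^2$ and $\int\{\|\delta_t\|^2+\|\delta_{tt}\|^2\}$ along backward solutions --- equivalently the range identity $\mathrm{Ran}\,\mathcal{Q}_T^{1/2}=\mathrm{Ran}\,\mathcal{F}_T$ that the paper asserts via $\Lambda$ before invoking the controllability--observability equivalence of Section~\ref{subsec:obs}. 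As written, your proposal silently assumes this comparison by asserting a false identity, so the bridge from the Douglas inequality to \eqref{ineq:obs_abc} is missing.
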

\begin{proof}
Since $X(t, \mathbf{x})$ is a mild solution of the linear equation~\eqref{linearSDE}, it is obvious that $X(t, \mathbf{x})$ is a Gaussian random variable with mean $S(t)\mathbf{x}$ and covariance $\mathcal{Q}_{t}$. If we assume that the range of $S(t)$ is a subset of the range of $\mathcal{Q}_{t}^{\frac{1}{2}}$, that is,  
$\textrm{Ran} S(t)\subset \textrm{Ran}\mathcal{Q}_{t}^{\frac{1}{2}}$, $t>0$,
then we could define a linear bounded operator $R(t)=\mathcal{Q}_{t}^{-\frac{1}{2}}\circ S(t)$.  From the Cameron-Martin formula~\cite[Proposition 2.24, pp. 68]{da2014stochastic}, it implies that for any $\mathbf{x}$, $\mathbf{y}\in \mathcal{H}$
 \begin{equation}\label{CMformula}
 \begin{aligned}
	P_t(\mathbf{x}, \Xi) &= \int_{\Lambda}G(t,\mathbf{x}, \tilde{\mathbf{x}})P_t(0, \mathrm{d}\tilde{\mathbf{x}})\\
	&= \int_{\Lambda}\frac{G(t,\mathbf{x}, \tilde{\mathbf{x}})}{G(t,\mathbf{y}, \tilde{\mathbf{x}})}G(t,\mathbf{y}, \tilde{\mathbf{x}})P_t(0, \mathrm{d}\tilde{\mathbf{x}}) = \int_{\Lambda}\frac{G(t,\mathbf{x}, \tilde{\mathbf{x}})}{G(t,\mathbf{y}, \tilde{\mathbf{x}})}P_t(\mathbf{y}, \mathrm{d}\tilde{\mathbf{x}})
 \end{aligned}
\end{equation} 
with
\[
G(t, \cdot, \tilde{\mathbf{x}})=\exp\left\{\big(R(t)\cdot, \mathcal{Q}_{t}^{-\frac{1}{2}}\tilde{\mathbf{x}}\big)_{\mathcal{H}}-\frac{1}{2}\|R(t)\cdot\|_{\mathcal{H}}^2\right\}.
\]	
Using~\eqref{CMformula} we can deduce that all transition probabilities $P_{t}(\mathbf{x}, \cdot)$, $\mathbf{x}\in\mathcal{H}$, are mutually equivalent, which means $M_{t}$, $t\geqslant0$, is $t$-regular.
This concludes a sufficient and necessary condition for the regularity of $M_{t}$ is that $\textrm{Ran} S(t)\subset \textrm{Ran}\mathcal{Q}_{t}^{\frac{1}{2}}$.

Next, we shall show that the condition $\textrm{Ran} S(t)\subset \textrm{Ran}\mathcal{Q}_{t}^{\frac{1}{2}}$ is equivalent to the fact that the equation~\eqref{evolution} is null controllable in time $T$. 
Indeed, we note that
\begin{equation*}
\begin{split}
\|\mathcal{Q}_{T}^{\frac{1}{2}}z\|^2 = (\mathcal{Q}_{T}z, z)_{\mathcal{H}} &= (\int_{0}^{T}S(s)\mathcal{B}\mathcal{B}^{\ast}S^{\ast}(s)z \mathrm{d}s, z)_{\mathcal{H}} \\
&= (\int_{0}^{T}S(T-s)\mathcal{B}\mathcal{B}^{\ast}S^{\ast}(T-s)z \mathrm{d}s, z)_{\mathcal{H}}=\|\mathcal{B}^{\ast}S^{\ast}(T-s)z\|_{\mathcal{W}}
\end{split}
\end{equation*}
and the linear map~\eqref{linear_map} satisfies
\[
 (F_{T}F^{\ast}_{T}z, z)_{\mathcal{H}}=\| F^{\ast}_{T}z\|^2_{_{\mathcal{U}}}=\| \Lambda^{-1}F^{\ast}_{T}z\|^2_{_{\mathcal{W}}} = \|\Lambda\mathcal{B}^{\ast}S^{\ast}(T-t)z\|_{\mathcal{W}}.
\]
Then by the definition of $\mathcal{Q}_{T}^{\frac{1}{2}}$ and the isomorphism $\Lambda$, we see $\textrm{Ran}\mathcal{Q}_{T}^{\frac{1}{2}} =\textrm{Ran}F_T$.
In addition, due to the representation $\mathbf{X}(T) =S(T)\mathbf{x} +F_T\mathbf{u}$, we know $\textrm{Ran}F_T$ consists of all states reachable in time $T$ from the zero state.
Moreover equation~\eqref{evolution} is null controllable in time $T$ if and only if $\textrm{Ran}S(T) \subset  \textrm{Ran}F_T = \textrm{Ran}\mathcal{Q}_{T}^{\frac{1}{2}} $. As discussed in Subsection~\ref{subsec:obs}, the equivalence between null controllability and observability is established. This completes the proof of Lemma~\ref{regularity}.
\end{proof}

\begin{theorem}\label{mixing}
Suppose the boundary of the domain satisfies the geometric condition~\eqref{G1}. Then the problem~\eqref{1}-\eqref{4}, subject to a random force of white noise type, has a unique invariant measure $\mu$ satisfying
\[
	\lim_{t\rightarrow\infty}\mathbb{E}\ell(X(t, \mathbf{x}))=\int_{\mathcal{H}}\ell(z)\mu(\mathrm{d}z)
\]
for any $\ell\in C_{b}(\mathcal{H})$, 
which means the problem~\eqref{1}-\eqref{4} is strongly mixing.
\end{theorem}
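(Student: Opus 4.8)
The plan is to derive Theorem~\ref{mixing} from the general ergodic theory of linear stochastic evolution equations, feeding in the observability inequality as the sole analytic input. First I would plug the geometric hypothesis~\eqref{G1} into Theorem~\ref{observability} to obtain~\eqref{ineq:obs_abc}; as shown in the discussion following that proof, \eqref{ineq:obs_abc} is exactly the abstract observability inequality~\eqref{ineq:obs} for the pair $(\mathcal{A},\mathcal{B})$, so $\mathcal{F}_T$ is onto once $T$ is large enough and the evolution equation~\eqref{evolution} is exactly, hence null, controllable in time $T$. By Lemma~\ref{regularity} this is equivalent to $M_t$ being $t$-regular: for $t_0$ past the controllability time the transition probabilities $P_{t_0}(\mathbf{x},\cdot)$, $\mathbf{x}\in\mathcal{H}$, are mutually equivalent; equivalently $\mathrm{Ran}\,S(t_0)\subset\mathrm{Ran}\,\mathcal{Q}_{t_0}^{1/2}$, so $R(t_0)=\mathcal{Q}_{t_0}^{-1/2}S(t_0)$ is bounded and the Cameron--Martin formula used in the proof of Lemma~\ref{regularity} is available.

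Next I would establish existence of an invariant measure. Since $X(t,\mathbf{x})$ is Gaussian with law $\mathcal{N}(S(t)\mathbf{x},\mathcal{Q}_t)$ and $t\mapsto\mathcal{Q}_t$ is nondecreasing, it suffices to show $\sup_{t\geqslant0}\mathrm{Tr}\,\mathcal{Q}_t<\infty$; then $\mathcal{Q}_\infty:=\lim_{t\to\infty}\mathcal{Q}_t$ exists as a trace class operator, the identity $\mathcal{Q}_\infty=\mathcal{Q}_t+S(t)\mathcal{Q}_\infty S^{\ast}(t)$ shows the centred Gaussian $\mu:=\mathcal{N}(0,\mathcal{Q}_\infty)$ is invariant for $M_t$, and the family $\{M_t^{\ast}\delta_{\mathbf{x}}\}_{t\geqslant0}$ is tight. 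The uniform bound on $\mathrm{Tr}\,\mathcal{Q}_t=\int_0^t\mathrm{Tr}(S(s)\mathcal{B}\mathcal{B}^{\ast}S^{\ast}(s))\,\mathrm{d}s$ is to be extracted from the dissipativity $\Re(\mathcal{A}\mathbf{f},\mathbf{f})_{\mathcal{H}}=-\tfrac{d}{\rho}\|f_4\|^2_{L^2(\Gamma_1)}\leqslant0$ of Proposition~\ref{prop:operator}, combined with the unique continuation and compactness argument of Step~4 in the proof of Theorem~\ref{observability}, which excludes conservative modes and upgrades the contractivity of $S(t)$ to a decay fast enough for the time integral to converge.

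With $M_t$ being $t_0$-regular and admitting the invariant measure $\mu$, Doob's theorem (\cite[Theorem~5.3.1]{PratoZabczyk}) then yields in one stroke that $\mu$ is the unique invariant measure and that $M_t^{\ast}\nu\rightharpoonup\mu$ as $t\to\infty$ for every $\nu\in\mathcal{P}(\mathcal{H})$. Taking $\nu=\delta_{\mathbf{x}}$ and pairing against $\ell\in C_b(\mathcal{H})$ gives $\mathbb{E}\,\ell(X(t,\mathbf{x}))=(M_t\ell)(\mathbf{x})\to\int_{\mathcal{H}}\ell\,\mathrm{d}\mu$, which is precisely the asserted strong mixing.

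I expect the $t$-regularity step to be routine given Lemma~\ref{regularity} and Theorem~\ref{observability}, and the invocation of Doob's theorem to be bookkeeping. The genuine obstacle is the existence/tightness step, i.e.\ the $t$-uniform control of $\mathrm{Tr}\,\mathcal{Q}_t$: mere contractivity of $S(t)$ does not suffice, so one must squeeze quantitative (or at least strong) stability of the semigroup out of the boundary dissipation and the geometric condition~\eqref{G1} --- the same structural ingredients that power the observability estimate of Theorem~\ref{observability}.
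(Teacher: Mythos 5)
Your overall architecture coincides with the paper's: the geometric condition~\eqref{G1} gives the observability inequality~\eqref{ineq:obs_abc} via Theorem~\ref{observability}, hence null/exact controllability of~\eqref{evolution}, hence $t$-regularity of $M_t$ by Lemma~\ref{regularity}; existence of an invariant measure is reduced to $\sup_{t\geqslant0}\mathrm{Tr}\,\mathcal{Q}_t<\infty$ (the paper invokes Theorem~6.2.1 of~\cite{PratoZabczyk} at exactly this point); and Doob's theorem then delivers uniqueness and strong mixing precisely as you describe.

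The gap is in the one step you yourself flag as the crux: you never actually establish $\sup_{t\geqslant0}\mathrm{Tr}\,\mathcal{Q}_t<\infty$. Your plan is to upgrade the contractivity of $S(t)$ to sufficiently fast decay using the dissipativity identity together with the unique-continuation/compactness argument of Step~4 of Theorem~\ref{observability}. But that argument is a qualitative uniqueness statement for the backward system observed through $\delta_t$; at best it yields strong stability of $S(t)$ (orbitwise convergence to zero, with no rate), and strong stability does not make $\int_0^\infty \mathrm{Tr}\bigl(S(s)\mathcal{B}\mathcal{B}^{\ast}S^{\ast}(s)\bigr)\mathrm{d}s$ converge --- you would need a quantitative (e.g.\ exponential, or at least square-integrable) decay of $S(s)\mathcal{B}$, i.e.\ a stabilization theorem for the damped forward semigroup, which neither $\Re(\mathcal{A}\mathbf{f},\mathbf{f})_{\mathcal{H}}=-\tfrac{d}{\rho}\|f_4\|^2_{L^2(\Gamma_1)}$ nor~\eqref{ineq:obs_abc} (which concerns the backward adjoint dynamics and the two observations $\delta_t,\delta_{tt}$) hands you for free, and which you do not prove. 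The paper sidesteps this entirely: it gets the uniform second-moment bound by a direct stochastic energy balance, exploiting that the noise enters exactly in the damped boundary component with $d>0$ --- Young's inequality plus the It\^o isometry let the dissipation term $-\tfrac{2d}{\rho}\int_0^t\int_{\Gamma_1}\delta_t^2$ absorb the energy injected by the noise, yielding $\mathbb{E}\|X(t,\mathbf{x})\|^2_{\mathcal{H}}\leqslant\mathbb{E}\|\mathbf{x}\|^2_{\mathcal{H}}+\tfrac{|\Gamma_1|}{4d\rho}$ uniformly in $t$ (see~\eqref{estimate1}--\eqref{estimate2}), whence $\mathrm{Tr}\,\mathcal{Q}_t=\mathbb{E}\|X(t,0)\|^2_{\mathcal{H}}$ is bounded. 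To close your argument you must either supply a rate of decay for $S(t)$ under~\eqref{G1} (a nontrivial extra theorem) or replace that step by this direct It\^o computation.
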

\begin{proof}
Thanks to Young's inequality, we have
 \begin{equation}\label{estimate1}
 \begin{aligned}
 	\|X(t, \mathbf{x})\|^{2}_{\mathcal{H}}&=\|\mathbf{x}\|^{2}_{\mathcal{H}}-{\frac{2d}{\rho}}\int_{0}^{t}\int_{\Gamma_1}\delta^{2}_{t}\mathrm{d}\Gamma_1 \mathrm{d}t-{\frac{2}{\rho}}\int_{\Gamma_1}\int_{0}^{t}\delta_{t}\mathrm{d}W(s) \mathrm{d}\Gamma_1 \\
	&\leq \|\mathbf{x}\|^{2}_{\mathcal{H}}-{\frac{2d}{\rho}}\int_{0}^{t}\int_{\Gamma_1}\delta^{2}_{t}\mathrm{d}\Gamma_1 \mathrm{d}t+{\frac{2d}{\rho}}\int_{\Gamma_1}\big(\int_{0}^{t}\delta_{t}\mathrm{d}W(s)\big)^2\mathrm{ d}\Gamma_1+\frac{|\Gamma_1|}{4d{\rho}}.
\end{aligned}
\end{equation} 
By the It\^o isometry $\int_{\Gamma_1}\mathbb{E}\big(\int_{0}^{t}\delta_{t}\mathrm{d}W(s)\big)^2 \mathrm{d}\Gamma_1=\int_{\Gamma_1}\mathbb{E}\big(\int_{0}^{t}\delta^2_{t}\mathrm{d}s\big) \mathrm{d}\Gamma_1$,  we deduce from (\ref{estimate1}) that
 \begin{equation}\label{estimate2}
 	\mathbb{E}\|X(t, \mathbf{x})\|^{2}_{\mathcal{H}}\leq\mathbb{E}\|\mathbf{x}\|^{2}_{\mathcal{H}}+\frac{|\Gamma_1|}{4d{\rho}}.
\end{equation}
Due to the relation 
\[
\sup\limits_{t>0}\mathbb{E}\|X(t, 0)\|^{2}_{\mathcal{H}}=\sup\limits_{t\geq0}\textrm{Tr}Q_t,
\] 
the estimate~\eqref{estimate2} yields $\sup_{t\geq0}\textrm{Tr}Q_t < \infty$. 
Furthermore, there exists an invariant measure, denoted by $\mu$, for the problem~\eqref{1}-\eqref{4}, which is derived from Theorem 6.2.1 in~\cite{PratoZabczyk}. 

Based on Lemma \ref{regularity}, we obtain the corresponding Markov semigroup $M_t$, $t\geqslant 0$, is $t_0$-regular for some $t_0>0$. Then from Doob's theorem~\cite{Doob} it implies that $\mu$ is the unique invariant measure for the semigroup $M_t$, which is also strongly mixing.
\end{proof}

\section{Conclusion}
\label{sec:conclusion}

In this paper, we have continued our investigation into the controllability and mixing properties of acoustic wave motion driven by a boundary random force. Building on our previous work~\cite{jiao2025mixing}, which focused on the wave equation with acoustic boundary conditions on locally reacting surfaces~($\sigma = 0$), we have extended our study to the more complex scenario of non-locally reacting surfaces~($\sigma \neq 0$). By employing the multiplier method, we have established the observability inequality, a crucial step for proving the controllability and mixing properties of our wave system. We have successfully addressed the significant mathematical challenge associated with estimating the integral~$\int_{0}^{T}\int_{\Gamma_1}\zeta^2|\phi_t|^2 \mathrm{d}\Gamma \mathrm{d}t$. Moreover, we have constructed an explicit control that ensures the null controllability of our wave system via a minimizer of a suitable functional.
Our results contribute to the understanding of controllability and mixing properties for the mathematical model discribing the interaction between acoustic wave motions and non-locally reacting surfaces.

\section*{Acknowledgement}
The authors thank the anonymous referee very much for the helpful suggestions.

\section*{Declaration of competing interest}
The authors have no conflicts to disclose.

\section*{References}

\bibliography{mybibfile}

\begin{thebibliography}{10}
\expandafter\ifx\csname url\endcsname\relax
  \def\url#1{\texttt{#1}}\fi
\expandafter\ifx\csname urlprefix\endcsname\relax\def\urlprefix{URL }\fi
\expandafter\ifx\csname href\endcsname\relax
  \def\href#1#2{#2} \def\path#1{#1}\fi

\bibitem{MI}
P.~M. Morse, K.~U. Ingard, Theoretical Acoustics, McGraw-Hill, 1968.

\bibitem{mugnolo2024wave}
D.~Mugnolo, E.~Vitillaro, The wave equation with acoustic boundary conditions
  on non-locally reacting surfaces, Vol. 303, American Mathematical Society,
  2024.

\bibitem{BR}
J.~T. Beale, S.~I. Rosencrans, Acoustic boundary conditions, Bulletin of
  American Mathematical Society 80 (1974) 1276--1278.

\bibitem{B1}
J.~T. Beale, Spectral properties of an acoustic boundary condition, Indiana
  University Mathematics Journal 25 (1976) 895--917.

\bibitem{B2}
J.~T. Beale, Acoustic scattering from locally reacting surfaces, Indiana
  University Mathematics Journal 26 (1977) 199--222.

\bibitem{LL}
W.~Littman, B.~Liu, On the spectral properties and stabilization of acoustic
  flow, SIAM Journal of Applied Mathematics 59 (1998) 17--34.

\bibitem{rivera2003polynomial}
J.~M. Rivera, Y.~Qin, Polynomial decay for the energy with an acoustic boundary
  condition, Applied Mathematics Letters 16~(2) (2003) 249--256.

\bibitem{AN2}
Z.~Abbas, S.~Nicaise, The multidimensional wave equation with generalized
  acoustic boundary conditions \textsc{II}: polynomial stability, SIAM Journal
  of Control and Optimization 53 (2015) 2582--2607.

\bibitem{AN1}
Z.~Abbas, S.~Nicaise, The multidimensional wave equation with generalized
  acoustic boundary conditions \textsc{I}: strong stability, SIAM Journal of
  Control and Optimization 53 (2015) 2558--2581.

\bibitem{Gao2}
Y.~Gao, J.~Liang, T.~Xiao, A new method to obtain uniform decay rates for
  multidimensional wave equations with nonlinear acoustic boundary conditions,
  SIAM Journal of Control and Optimization 56 (2018) 1303--1320.

\bibitem{cavalcanti2020stability}
M.~M. Cavalcanti, V.~N. Domingos~Cavalcanti, C.~L. Frota, A.~Vicente, Stability
  for semilinear wave equation in an inhomogeneous medium with frictional
  localized damping and acoustic boundary conditions, SIAM Journal on Control
  and Optimization 58~(4) (2020) 2411--2445.

\bibitem{li2025dynamic}
C.~Li, H.-K. Xu, Dynamic behaviors for the acoustic model with variable
  coefficients and nonautonomous damping, Zeitschrift f{\"u}r angewandte
  Mathematik und Physik 76~(1) (2025) 17.

\bibitem{carriao2025stability}
P.~C. Carriao, A.~Vicente, Stability for the wave equation on hyperbolic space
  with acoustic boundary conditions, Zeitschrift f{\"u}r angewandte Mathematik
  und Physik 76~(2) (2025) 1--28.

\bibitem{vitillaro2024three}
E.~Vitillaro, Three evolution problems modeling the interaction between
  acoustic waves and non-locally reacting surfaces, Journal of Evolution
  Equations 24~(2) (2024) 41.

\bibitem{barbieri2022damped}
A.~Barbieri, E.~Vitillaro, The damped wave equation with acoustic boundary
  conditions and non-locally reacting surfaces, in: Semigroup Forum, Vol. 105,
  Springer, 2022, pp. 646--679.

\bibitem{vicente2016uniform}
A.~Vicente, C.~L. Frota, Uniform stabilization of wave equation with localized
  damping and acoustic boundary condition, Journal of Mathematical Analysis and
  Applications 436~(2) (2016) 639--660.

\bibitem{ha2016general}
T.~G. Ha, General decay estimates for the wave equation with acoustic boundary
  conditions in domains with nonlocally reacting boundary, Applied Mathematics
  Letters 60 (2016) 43--49.

\bibitem{HA2018201}
T.~G. Ha, Energy decay for the wave equation of variable coefficients with
  acoustic boundary conditions in domains with nonlocally reacting boundary,
  Applied Mathematics Letters 76 (2018) 201--207.

\bibitem{frota2014mixed}
C.~L. Frota, L.~A. Medeiros, A.~Vicente, A mixed problem for semilinear wave
  equations with acoustic boundary conditions in domains with non-locally
  reacting boundary, Electronic Journal of Differential Equations 243 (2014)
  1--14.

\bibitem{jiao2025mixing}
Z.~Jiao, X.~Li, Mixing for acoustic wave motion with boundary random force,
  Journal of Mathematical Physics 66~(4) (2025) 041517.

\bibitem{avalos2003exact}
G.~Avalos, I.~Lasiecka, Exact controllability of structural acoustic
  interactions, Journal de math{\'e}matiques pures et appliqu{\'e}es 82~(8)
  (2003) 1047--1073.

\bibitem{PratoZabczyk}
G.~D. Prato, J.~Zabczyk, Ergodicity for Infinite Dimensional Systems, Cambridge
  University Press, 1996.

\bibitem{goldstein2006}
G.~R. Goldstein, Derivation and physical interpretation of general boundary
  conditions, Advances in Differential Equations 11~(4) (2006) 457--480.

\bibitem{Coron}
J.~M. Coron, Control and Nonlinearity, American Mathematical Society, 2007.

\bibitem{rudin1991functional}
W.~Rudin, Functional Analysis:, International series in pure and applied
  mathematics, McGraw-Hill, 1991.

\bibitem{Pazy}
A.~Pazy, Semigroups of Linear Operators and Applications to Partial
  Differential Equations, Springer-Verlag: New York, 1983.

\bibitem{ball1977strongly}
J.~M. Ball, Strongly continuous semigroups, weak solutions, and the variation
  of constants formula, Proceedings of the American Mathematical Society 63~(2)
  (1977) 370--373.

\bibitem{LTZ}
I.~Lasiecka, R.~Triggiani, X.~Zhang, Nonconservative wave equations with
  unobserved neumann b.c.: global uniqueness and observability in one shot, AMS
  Contemporary Mathematics 268 (2000) 227--325.

\bibitem{LT}
I.~Lasiecka, R.~Triggiani, Uniform stabilization of the wave equation with
  dirichlet or neumann feedback control without geometrical conditions, Applied
  Mathematics and Optimization 25 (1992) 189--224.

\bibitem{Simon}
J.~Simon, Compact acts in the space $\textsc{L}^{p}(0, t; b)$, Annali di
  Matematica Pura et Applicate 146 (1987) 65--96.

\bibitem{evans2022partial}
L.~C. Evans, Partial differential equations, Vol.~19, American mathematical
  society, 2022.

\bibitem{da2014stochastic}
G.~Da~Prato, J.~Zabczyk, Stochastic equations in infinite dimensions, Cambridge
  university press, 2014.

\bibitem{Doob}
J.~L. Doob, Asymptotic properties of markov transition probabilities,
  Transactions in American Mathematical Society 63 (1948) 394--421.

\end{thebibliography}

\end{document}